\def\XX{\mathbb{X}}
\def\YY{\mathbb{Y}}
\def\Hat{\widehat}
\def\Bar{\overline}
\def\ra{\rangle}
\def\la{\langle}
\def\ve{\varepsilon}
\def\B{\mathbb{B}}
\def\h{\hfill\Box}
\def\R{\mathbb{R}}
\def\ox{\bar{x}}
\def\OX{\Bar{X}}
\def\oy{\bar{y}}
\def\OY{\Bar{Y}}
\def\ov{\bar{v}}
\def\OV{\Bar{V}}
\def\OW{\Bar{W}}
\def\OU{\Bar{U}}
\def\cone{\mbox{\rm cone}\,}
\def\Im{\mbox{\rm Im}\,}
\def\rank{\mbox{\rm rank}\,}
\def\Tr{\mbox{\rm Tr}\,}
\def\dom{\mbox{\rm dom}\,}
\def\Ker{\mbox{\rm Ker}\,}
\def\bd{\mbox{\rm bd}\,}
\def\cl*co{\mbox{\rm cl}^*\mbox{\rm co}\,}
\newcommand{\eqdef}{\stackrel{\text{\rm\tiny def}}{=}}
\def\cl{\mbox{\rm cl}\,}
\def\h{\hfill\triangle}
\def\dn{\downarrow}
\def\TT{\mathbb{T}}
\def\oR{\Bar{\R}}
\def\lm{\lambda}
\def\al{\alpha}
\def\Lm{\Lambda}
\def\hs7{\hspace*{7pt}}
\def\Id{\mathbb{I}}
\renewcommand{\theequation}{\thesection.\arabic{equation}}
\def\h{\hfill\Box}
\def\kk{\kappa}
\begin{document}

\newtheorem{Theorem}{Theorem}[section]
\newtheorem{Conjecture}[Theorem]{Conjecture}
\newtheorem{Proposition}[Theorem]{Proposition}
\newtheorem{Remark}[Theorem]{Remark}
\newtheorem{Lemma}[Theorem]{Lemma}
\newtheorem{Corollary}[Theorem]{Corollary}
\newtheorem{Definition}[Theorem]{Definition}
\newtheorem{Example}[Theorem]{Example}
\newtheorem{Fact}[Theorem]{Fact}
\newtheorem*{pf}{Proof}
\renewcommand{\theequation}{\thesection.\arabic{equation}}
\normalsize
\normalfont
\medskip
\def\endproof{$\h$\vspace*{0.1in}}

\title{\bf Solution uniqueness of convex optimization problems via the radial cone}
\date{}
\author{Jalal Fadili\footnote{Normandie Universit\'e, ENSICAEN, UNICAEN, CNRS, GREYC, France; email: jalal.fadili@ensicaen.fr}\,, \;\; Tran T. A. Nghia\footnote{Department of Mathematics and Statistics, Oakland University, Rochester, MI 48309, USA; email: nttran@oakland.edu}\,,\; \; and \;\; Duy Nhat Phan\footnote{University of Dayton Research Institute, University of Dayton, Dayton, OH 45469, USA; duynhat.phan@udri.udayton.edu} }

\maketitle
{\small \noindent {\bf Abstract.} In this paper, we mainly study solution uniqueness of some convex optimization problems. Our characterizations of solution uniqueness are in terms of the radial cone. This approach allows us to know when a unique solution is a strong solution or even a tilt-stable one without checking second-order information. Consequently, we apply our theory to low-rank optimization problems. The radial cone is fully calculated in this case and numerical experiments show that our characterizations are sharp.

}
\section{Introduction}
In many disciplines of science and engineering, solving the following linear inverse problem is essential:
\begin{equation}\label{IP}
    \Phi x=y_0,
\end{equation}
where $\Phi:\XX\to \YY$ is a linear operator between two finite dimensional vector spaces and $y_0=\Phi x_0$ is an element of $\YY$. Particularly, in signal/image processing, $y_0$ is a known observation of an unknown signal $x_0\in \XX$ through the forward linear operator $\Phi$. In regression in statistical learning, $y_0$ is the response vector, $x_0\in \XX$ is the unknown parameter vector, and $\Phi$ is the design matrix. Recovering exactly the signal $x_0$ by solving the linear system \eqref{IP} through least-squares raises non-uniqueness issues, as usually the dimension of $\YY$ is much smaller than the dimension of $\XX$, which makes \eqref{IP} underdetermined and thus has infinitely many solutions. On the other hand, the vector $x_0$ usually possesses some special features (in the form of some low complexity) that cannot be captured by naive least-squares. Thus, a form complexity regularization is necessary in which the recovery of $x_0$ is cast as the regularized optimization problem:
\begin{equation}\label{P0}
    \min_{x\in \XX}\quad g(x)\quad \mbox{subject to}\quad \Phi x=y_0,
\end{equation}
where $g:\XX\to \R$ is some particular function/regularizer, which is chosen to promote the low complexity, i.e. the expected features, of the original vector $x_0$. Solving this optimization problems narrows down the possible candidate solutions for $x_0$. And if we want the solution of \eqref{P0} to be exactly $x_0$, it should hopefully be the unique solution. That is one of the main reasons why solution uniqueness of problem \eqref{P0} has drawn much attention from different authors and research groups; see, e.g., \cite{BDE09,CRPW12,CR09,CT05,DH01,FR13,F05,FNT21,G17,GHS11,HP23,MS19,RF08,T13,VPDF13,ZYC15} for an incomplete list. Although the regularizer $g$ can be nonconvex in many problems \cite{BDE09,FR13}, in this paper we focus on the case when it is a continuous convex function. Unlike system \eqref{IP}, problem \eqref{P0} likely has a unique solution provided that the number of observations, i.e., the dimension of $\YY$ is large enough compared to some intrinsic dimension of $x_0$ (but still much smaller than that of $\XX$) \cite{CRPW12,CR09,CR13}.

Another optimization problem relative to \eqref{P0} studied in our paper is 
\begin{equation}\label{CP0}
    \min_{x\in \XX}\quad f(\Phi x)+g(x),
\end{equation}
where $f:\YY\to \oR$ is a proper lower semicontinuous (l.s.c.) convex function that is twice differentiable with positive definite Hessian in the interior of its domain. This problem is often used when there is noise in the observation $y_0$ and the loss function $f$ measures the error between $\Phi x$ and the noisy observation. An important class of problems \eqref{P0} and \eqref{CP0} in signal processing and machine learning is when $g$ is the $\ell_1$-norm in $\XX=\R^n$, which is employed to promote  sparsity features of  their optimal solutions. Solution uniqueness of $\ell_1$-optimization is characterized by the so-called  {\em Nondegenerate Source Condition} and {\em Restricted Injectivity} of operator $\Phi$ \cite{CT05,F05,GHS11,VPDF13} or the {\em Null Space Property} and its
variants \cite{DH01,FR13}; see also \cite{T13,ZYC15,ZYY16,G17,BLN22} for different approaches and characterizations. 

When $g$ is the $\ell_1/\ell_2$ norm in $\XX=\R^n$ (also known as the {\em group Lasso regularizer}), a popular regularizer in image processing and statistics when the sparsity is structured in groups, sufficient conditions for solution uniqueness of problem \eqref{P0} were obtained in \cite{G11,RRN12,JKL15,RF08}. Full characterizations of this property for $\ell_1/\ell_2$ optimization problems were recently established in \cite{FNT21} via {\em second-order variational analysis} and the exact computation of the {\em descent cone} of $\ell_1/\ell_2$ norm introduced in \cite{CRPW12}. Another important class of functions $g$ popular in matrix optimization is the {\em nuclear norm} $\|\cdot\|_*$ in the space of matrices $\XX=\R^{n_1\times n_2}$. Problem \eqref{P0} in this case is known as the {\em nuclear norm minimization problem}:
\begin{equation}\label{NNM0}
\min_{X\in \R^{n_1\times n_2}}\quad \|X\|_*\quad \mbox{subject to}\quad \Phi X=Y_0,
\end{equation}
where $\Phi:\R^{n_1\times n_2}\to \R^m$ is a linear operator and $Y_0$ is a vector in $\R^m$. Employing  the nuclear norm enforces the low-rank property of optimal solutions that is required in many matrix optimization problems; see, e.g.,  \cite{CP10,CR09,CR13}. Sufficient conditions for solution uniqueness of problem~\eqref{NNM0} play important roles in obtaining a small bound on the number of measurements $m$ such that solving problem~\eqref{NNM0} recovers exactly the solution $X_0$ \cite{CRPW12,CR09,CR13}. Solution uniqueness of this problem can be characterized via the descent cone in \cite{CRPW12}, whose calculation is not known yet. Recently, \cite{HP23} obtained a new characterization for this property. Their condition is verifiable in low dimensional spaces, but looks very complicated in higher dimensions. A different characterization is derived in \cite[Lemma~3.1]{LPB21} via the so-called {\em radial cone} \cite{BS00} of the nuclear norm; see also \cite[Theorem~3.1]{HKS23} for a more general class of {\em gauge functions}. However, this radial cone is not fully calculated there and, according to the authors of \cite{LPB21}, it is not easy to check the corresponding characterization (possibly due to non-closedness of the radial cone). Alternatively, those authors use the closure of the radial cone, which is known as the {\em tangent cone} to check solution uniqueness. In the recent paper \cite{FNP23}, it is shown that using the tangent cone instead of the radial cone can characterize  {\em strong minima} \cite{BS00} of problem~\eqref{NNM0}.

Among three different ways of characterizing solution uniqueness of problem~\eqref{NNM0} in \cite{CRPW12,LPB21,HP23} mentioned above, we argue that using the radial cone is the simplest one. In this paper, we push this direction forward for general problems \eqref{P0} and \eqref{CP0} by providing more analysis, exact calculations of the radial cone, numerical methods, and making interconnections with some other advanced results for characterizing {\em sharp minima} \cite{C78,P79,FNT21}, {\em strong minima} \cite{BS00,FNP23}, as well as {\em tilt stability} \cite{PR98,MR12,LZ13}. 

\medskip

\paragraph{Contributions} In Theorem~\ref{UniRa}, we show that $\ox$ is the unique optimal solution of problem \eqref{CP0} if and only if $\oy=-\Phi^*\nabla f(\Phi \ox)\in \partial g(\ox)$ and 
\begin{equation}\label{R1}
    \Ker \Phi \cap \mathcal{R}_{(\partial g)^{-1}(\oy)}(\ox)=\{0\}, 
\end{equation}
 where $\mathcal{R}_{(\partial g)^{-1}(\oy)}$ is the radial cone at $\ox$ to the set $(\partial g)^{-1}(\oy)$ defined in Section~\ref{sec:prelim}. For problem \eqref{P0}, we also have a similar result that $x_0$ is the unique solution if and only if there exists a {\em dual certificate} $y\in  \partial g(x_0)\cap \Im \Phi^*$ such that 
\begin{equation}\label{R2}
    \Ker \Phi \cap \mathcal{R}_{(\partial g)^{-1}(y)}(x_0)=\{0\}.  
\end{equation}
This characterization was constructed in \cite[Lemma~3.1]{LPB21} for the case of nuclear norm minimization \eqref{NNM0} and recently in \cite[Theorem~3.1]{HKS23} for convex gauge functions with some additional assumptions. Our results work for any continuous convex  functions $g$. Most importantly, we advance this approach in the following ways: 
\begin{itemize}
\item We establish the connection between  solution uniqueness and strong minima \cite{BS00,RW98} for problems \eqref{P0} and \eqref{CP0}. Obviously, a strong solution is a unique solution for convex optimization problems. The opposite implication is not true in general. But when $g$ is the $\ell_1/\ell_2$ norm,  \cite{FNT21} recently shows that unique solutions of problems \eqref{P0} and \eqref{CP0} are also strong ones via second-order analysis. It is not well-understood the reason why the structure of $\ell_1/\ell_2$ norm can give such an equivalence.  In this paper, we have a clearer answer for this phenomenon. Closedness of the radial cone plays an important role there.  Indeed, we prove this equivalence in Corollary~\ref{StrUnq} in the case when  the function $g$ satisfies the {\em quadratic growth condition} \cite{BS00,FNP23} and the radial cone in \eqref{R1} and \eqref{R2} is closed; see also Remark~\ref{StroUniq} for similar analysis on problem~\eqref{P0}. This is quite a surprise because usually one has to check second-order information for strong minima, while our conditions \eqref{R1} and \eqref{R2} only use first-order information.

\item We unify the two approaches of using radial cones and descent cones for solution uniqueness. As discussed above, solution uniqueness is also characterized by descent cones introduced in \cite{CRPW12}. In the recent paper \cite[Theorem~5.1]{FNT21}, the descent cone of $\ell_1/\ell_2$ norm (and thus $\ell_1$ norm) has an exact computation via its first-order information. We generalize that result in this paper for any continuous convex function $g$. Exact formulae of the descent cone via the radial cone and the {\em critical cone} are provided in Proposition~\ref{DesRd} and Proposition~\ref{Int}. These calculations allow us to link up the two aforementioned approaches for solution uniqueness in Theorem~\ref{UniRd2}. Consequently, the descent cone of the nuclear norm is also computed exactly in this paper. 

\item We are able to apply our general theory to the nuclear norm minimization problem \eqref{NNM0}. Although solution uniqueness of  problem \eqref{P0} was already characterized in \cite[Lemma~3.1]{LPB21} via the radial cone, it was not fully calculated there. In Lemma~\ref{RC}, we provide a complete computation of this radial cone via  initial data. This leads us to a full characterization of solution uniqueness for problem~\eqref{NNM0} in Corollary~\ref{UniRaNu}.  Other interesting connections of solution uniqueness with sharp minima \cite{C78,P79,CR13,FNT21} and tilt stability \cite{PR98,MR12,LZ13,DMN14} for nuclear norm minimization problems are also obtained in Sections~\ref{sec:uniquecomp} and \ref{sec:uniquelin}. One of our most exciting results is to show the equivalence between solution uniqueness and tilt stability for problem \eqref{CP0} in Corollary~\ref{Tilt} under {\em Nondegenerate Condition} (a.k.a. {\em Strict Complementary Condition} \cite{BS00}) when $g$ is the nuclear norm. This implicitly  indicates that solution uniqueness makes problem \eqref{CP0} stable to be solved. Numerical experiments on the nuclear norm minimization problem \eqref{NNM} in Section~\ref{NE} confirm that our characterizations of solution uniqueness are sharp and computationally friendly.
\end{itemize}

\section{Preliminaries}\label{sec:prelim}
\setcounter{equation}{0} 

Throughout this paper, let $\XX$ be an Euclidean space and $\XX^*$ be its dual space endowed with the inner product $\la v,u\ra$ for $v\in \XX^*$ and $u\in \XX$. Suppose that $\|\cdot\|$ is the  norm in $\XX$ induced by the latter inner product. We denote $\B_r(\ox)$ by the open ball with center $\ox\in \XX$ and radius $r>0$. We first recall some notions and classical results from convex analysis that will be important to our exposition; see e.g. \cite{R70} for a comprehensive account.

Let $\Omega$ be a closed convex set in $\XX$ and $\ox\in \Omega$. We write ${\rm cone}\, \Omega$ for the {\em conic hull} of $\Omega$ and ${\rm aff }\, \Omega$ for the {\em affine hull} of $\Omega$.
The {\em relative interior} of $\Omega$ is denoted by
\[
{\rm ri}\, \Omega=\{x\in \Omega|\;\exists \ve>0, \B_\ve(x)\cap  {\rm aff }\, \Omega\neq \emptyset\}; 
\]
 The {\em radial cone} (known also the {\em cone of feasible directions}) and the {\em tangent cone} to $\Omega$ at $\ox$ are defined, respectively, by
\begin{equation}\label{RT}
    \mathcal{R}_\Omega(\ox)={\rm cone}\,(\Omega-\ox)=\R_+(\Omega-\ox)\quad \mbox{and}\quad \mathcal{T}_\Omega(\ox)=\overline{\mathcal{R}_\Omega(\ox)},
\end{equation}
which means that $\mathcal{T}_\Omega(\ox)$ is  the topological closure of the radial cone $\mathcal{R}_\Omega(\ox)$; see, e.g., \cite[Definition~2.54 and Proposition~2.55]{BS00}. When $\Omega$ is a {\em polyhedral} set, i.e., it can be represented as the intersection of finitely many closed half spaces, the radial cone is closed and thus is exactly the tangent cone at the same point $\ox\in \Omega$. But in general, the radial cone is not closed and strictly smaller than the tangent cone. 

Now let $\varphi:\XX\to \oR\eqdef\R\cup\{\infty\}$ be a proper lower semi-continuous (l.s.c.) convex function with  nonempty domain $\dom \varphi\eqdef\{x\in \XX|\; \varphi(x)<\infty\}.$ For $\ox\in \dom \varphi$, the {\em subdifferential} of $\varphi$ at $\ox$ is defined by
\[
\partial\varphi(\ox)\eqdef\{v\in \XX^*|\; \varphi(x)-\varphi(\ox)\ge \la v,x-\ox\ra, \forall x\in \XX\}.
\]
The {\em Legendre-Fenchel conjugate} of $\varphi$ is the l.s.c. convex function $\varphi^*:\XX^*\to \oR$ defined by
\[
\varphi^*(v)\eqdef\sup\{\la v,x\ra-\varphi(x)|\; x\in \XX\}\quad \mbox{for}\quad v\in \XX^*.
\]
Note that $v\in \partial \varphi(x)$ if and only if $x\in \partial \varphi^*(v)$, i.e., $\partial \varphi^*(v)=(\partial \varphi)^{-1}(v) = \{x\in X|\; v\in \partial \varphi(x)\}$, which plays an important role in our study later in the paper. The directional derivative of $\varphi$ at $\ox$ in some direction $w\in \XX$ is
\begin{equation*}
    d\varphi (\ox)(w)\eqdef\lim_{t\dn 0}\dfrac{\varphi(\ox+tw)-\varphi(\ox)}{t}. 
\end{equation*}
If $\varphi$ is continuous around $\ox$, it is well-known that 
\begin{equation}\label{Dd}
    d\varphi (\ox)(w)= \sup_{v\in \partial \varphi(\ox)}\la v,w\ra. 
\end{equation}

The point $\ox$ is called a minimizer or an optimal solution of the following problem
\begin{equation}\label{OP}
\min_{x\in \XX}\quad \varphi(x),
\end{equation}
if $\varphi(x)\ge \varphi(\ox)$ for all $x\in \XX$, i.e., $0\in \partial \varphi(\ox)$ or $\ox\in \partial \varphi^*(0)$. We say $\ox$ is a {\em unique} (or {\em strict}) solution  problem~\eqref{OP} if  $\varphi(x)> \varphi(\ox)$ for all $x\in \XX\setminus\{\ox\}$, which means that  $\{\ox\}= \partial \varphi^*(0)$. Moreover, $\ox$ is said to be a {\em sharp solution} if there exists some modulus $c>0$ such that 
\[
\varphi(x)-\varphi(\ox)\ge c\|x-\ox\|\quad \mbox{for all}\quad x\in \XX. 
\]
The notion of sharp minima was introduced independently by Polyak \cite{P79} and Crome \cite{C78} under the name {\em strong uniqueness}. On the other hand, $\ox$ is a {\em strong solution} if there exist $\ve>0$ and modulus $\kk>0$ such that 
\[
\varphi(x)-\varphi(\ox)\ge \frac{\kk}{2}\|x-\ox\|^2\quad \mbox{for all}\quad x\in \B_\ve(\ox).
\]
The notion of strong minima is weaker than that of sharp minima. For instance, the former is only a local property. We refer the readers to \cite{BS00,P87, RW98} for different ways to characterize sharp/strong minima and their roles in optimization. 

\medskip

Finally in this section, we recall the {\em quadratic growth condition} for functions. 
\begin{Definition}[Quadratic growth condition]\label{def:QGC} The convex function $\varphi$ is said to satisfy the quadratic growth condition at $\ox\in \dom \varphi$ for $\ov\in \partial \varphi(\ox)$ if there exist $\ve>0$ and modulus $\kk>0$ such that 
\begin{equation}\label{QGC}
\varphi(x)-\varphi(\ox)-\la \ov,x-\ox\ra\ge\frac{\kk}{2}{\rm dist}\,(x;\partial \varphi^*(\ov))^2     \quad \mbox{for all}\quad x\in \B_\ve(\ox).
\end{equation}
\end{Definition}
When the function $\varphi$ is convex, the quadratic growth condition is equivalent to the \emph{\L{}ojasiewicz inequality with exponent $\frac{1}{2}$} \cite{BNPS17} and the  \emph{metric subregularity of the subdifferential}   \cite{AG08, DMN14, ZT95}.  There are  broad classes of convex functions satisfying the quadratic growth condition such as \emph{piece-wise linear quadratic convex} functions \cite[Definition 10.20]{RW98} and many {\em convex spectral} functions \cite{CDZ17}; see also \cite{ZS17} for some other ones. 

\section{Solution uniqueness of convex composite optimization problems}\label{sec:uniquecomp}
\setcounter{equation}{0} 

\subsection{General regularizer}
In this section we study the solution uniqueness of the following convex composite optimization problem
\begin{equation}\label{CP}
\min_{x\in \XX}\quad \varphi(x)\eqdef f(\Phi x)+g(x),
\end{equation}
where $\Phi:\XX\to\YY$ is a linear operator between two Euclidean spaces, $f:\YY\to \oR$ is a proper l.s.c. convex function that is differentiable in the interior of its domain, and $g:\XX\to \oR$ is a proper l.s.c. convex function with 
\begin{equation}\label{SA}
\Phi^{-1}\left({\rm int}\, (\dom f)\right)\cap \dom g\neq \emptyset.
\end{equation}
Throughout this section, we have two additional standing assumptions on the loss function $f$:
\begin{enumerate}[ \rm (a) ]
\item $f$ is twice continuously  differentiable in ${\rm int}\, (\dom f)$. \label{aA}
\item The Hessian $\nabla^2 f(y)$ is positive definite for all $y\in {\rm int}\, (\dom f)$, which implies that $f$ is strictly convex in the interior of its domain. \label{aB}
\end{enumerate}
The class of loss functions satisfying the above standing assumptions includes  strongly convex and twice continuously differentiable functions \cite{ZS17} together with the discrete {\em Kullback-Leiber divergence} (known also as {\em relative entropy}) \cite{C91} that have been used widely in statistical/machine learning and signal processing.

Our first result establishes the characterization for solution uniqueness of problem~\eqref{CP} via the radial cone. 

\begin{Theorem}[Solution uniqueness via the radial cone]\label{UniRa} Let $\ox\in \Phi^{-1}\left({\rm int}\, (\dom f)\right)\cap \dom g$ be an optimal solution of problem \eqref{CP} and $\oy\eqdef-\Phi^*\nabla f(\Phi\ox)\in \partial g(\ox)$. Then $\ox$ is the unique solution if and only if 
\begin{equation}\label{KR}
    \Ker \Phi\cap \mathcal{R}_{\partial g^*(\oy)}(\ox)=\{0\}. 
\end{equation}
\end{Theorem}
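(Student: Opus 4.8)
The plan is to characterize uniqueness through a first-order argument that compares the value of $\varphi$ along feasible perturbations. Since $\ox$ is optimal and $f$ is strictly convex on $\inte(\dom f)$ with $\Phi\ox\in\inte(\dom f)$, any competing solution $x$ must satisfy $\Phi x=\Phi\ox$ (otherwise strict convexity of $f$ on the segment would strictly decrease $\varphi$, or produce a point outside $\dom f$, contradicting optimality). Hence $x-\ox\in\Ker\Phi$, and on the affine slice $\{\ox+h:h\in\Ker\Phi\}$ the problem reduces to minimizing $h\mapsto g(\ox+h)$ plus the constant $f(\Phi\ox)$. So $\ox$ is the unique solution of \eqref{CP} if and only if $0$ is the unique minimizer of $h\mapsto g(\ox+h)$ over $h\in\Ker\Phi$, equivalently $g(\ox+h)>g(\ox)$ for all nonzero $h\in\Ker\Phi$.

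Next I would translate this into the language of $\partial g^*(\oy)=(\partial g)^{-1}(\oy)$. The key observation is that $\oy\in\partial g(\ox)$ means $g(\ox+h)-g(\ox)\ge\la\oy,h\ra$ for all $h$, with equality for a given $h$ precisely when $\oy\in\partial g(\ox+h)$, i.e. $\ox+h\in\partial g^*(\oy)$. Since $\oy=-\Phi^*\nabla f(\Phi\ox)$, for $h\in\Ker\Phi$ we have $\la\oy,h\ra=-\la\nabla f(\Phi\ox),\Phi h\ra=0$; thus the subgradient inequality gives $g(\ox+h)\ge g(\ox)$ automatically for every $h\in\Ker\Phi$, and equality holds iff $\ox+h\in\partial g^*(\oy)$. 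Therefore the failure of uniqueness — existence of nonzero $h\in\Ker\Phi$ with $g(\ox+h)=g(\ox)$ — is equivalent to $(\ox+\Ker\Phi)\cap\partial g^*(\oy)\supsetneq\{\ox\}$.

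It remains to connect this intersection condition with the radial cone condition \eqref{KR}. Because $\partial g^*(\oy)$ is a closed convex set containing $\ox$, the set $\{h\in\Ker\Phi:\ox+h\in\partial g^*(\oy)\}=\Ker\Phi\cap(\partial g^*(\oy)-\ox)$ is convex and contains $0$; it is a singleton $\{0\}$ if and only if the cone it generates, $\Ker\Phi\cap\cone(\partial g^*(\oy)-\ox)=\Ker\Phi\cap\mathcal{R}_{\partial g^*(\oy)}(\ox)$, is $\{0\}$. One direction is immediate; for the converse, if there is a nonzero $h$ in the radial cone intersected with $\Ker\Phi$, then $h=t(z-\ox)$ for some $t>0$, $z\in\partial g^*(\oy)$, and since $\Ker\Phi$ is a subspace, $z-\ox\in\Ker\Phi$ as well, giving a nonzero element of $\Ker\Phi\cap(\partial g^*(\oy)-\ox)$. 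I would also record that $\partial g$ having full domain (equivalently $g$ continuous, which follows from $g$ being finite-valued convex in the problems of interest) guarantees $\oy\in\partial g(\ox)$ is a genuine constraint rather than vacuous, and that assumptions \ref{aA}--\ref{aB} plus \eqref{SA} make the reduction to $\Phi x=\Phi\ox$ rigorous.

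The main obstacle is the rigorous justification of the first reduction step: one must rule out competitors $x$ with $\Phi x\ne\Phi\ox$ using strict convexity of $f$ while being careful about points where $\Phi x$ lies on the boundary of $\dom f$ (where $f$ may fail to be differentiable or even take the value $+\infty$); the assumption $\ox\in\Phi^{-1}(\inte\dom f)$ and a convexity-along-segments argument handle this, but it needs to be stated carefully. The rest is essentially a repackaging of convex subdifferential calculus and the definition of the radial cone, so no second-order information enters — which is exactly the point of the theorem.
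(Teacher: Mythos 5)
Your argument is correct, and at its core it is the same first-order strategy as the paper's: positive definiteness of $\nabla^2 f$ forces any competing solution into $\ox+\Ker\Phi$, and the inverse relation $\oy\in\partial g(x)\Leftrightarrow x\in\partial g^*(\oy)$ identifies the solution set within that slice with $(\ox+\Ker\Phi)\cap\partial g^*(\oy)$. The packaging differs in three ways worth noting. First, you run a single chain of equivalences through the equality case of the subgradient inequality $g(\ox+h)-g(\ox)\ge\la\oy,h\ra=0$ for $h\in\Ker\Phi$, whereas the paper proves the two implications separately, constructing new optimal points $\ox+\al t w$ for the forward direction. Second, to show $\Phi\tilde x=\Phi\ox$ for a competitor $\tilde x$, you use that $f$ cannot be affine on a nondegenerate segment issuing from $\Phi\ox\in\inte(\dom f)$; the paper instead combines monotonicity of $\partial g$ with the mean value theorem for $\nabla f$ and the Hessian lower bound. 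Your route needs only strict convexity of $f$ near $\Phi\ox$ (assumption (b) without the $C^2$ hypothesis (a)), so it is marginally more elementary, though you should spell out the localization: one compares $\ph$ at $\ox$, $\ox+\lm w$, $\ox+2\lm w$ for small $\lm$ so that all three images under $\Phi$ lie in $\inte(\dom f)$ where $f$ is strictly convex, since the midpoint of $[\Phi\ox,\Phi\tilde x]$ itself need not lie there. (Your parenthetical ``or produce a point outside $\dom f$'' is a red herring: the segment stays in $\dom\ph$ by convexity.) Third, you make explicit the elementary but useful observation that, because $\Ker\Phi$ is a subspace, $\Ker\Phi\cap(\partial g^*(\oy)-\ox)=\{0\}$ if and only if $\Ker\Phi\cap\mathcal{R}_{\partial g^*(\oy)}(\ox)=\{0\}$; the paper handles the same point implicitly by shrinking the parameter $t$. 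No genuine gap remains once the localization in the strict convexity step is written out.
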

\begin{proof} Suppose that $\ox$ is a unique solution of problem \eqref{CP}. Pick any $w\in \Ker \Phi\cap \mathcal{R}_{\partial g^*(\oy)}(\ox)$, we find some  $t>0$ such that $\ox+tw\in \partial g^*(\oy)$. As $\partial g^*(\oy)$ is a convex set and $\ox\in \partial g^*(\oy)$, we have $\ox+\alpha tw\in \partial g^*(\oy)$ for any $\al\in (0,1)$. Since $\Phi^{-1}\left({\rm int}\, (\dom f)\right)$ is an open set containing $\ox$, $\ox+\alpha tw\in \Phi^{-1}({\rm int}\, (\dom f))$ for sufficiently small $\al>0$. It follows that 
\[
\partial \varphi(\ox+\al tw)=\Phi^*\nabla f(\Phi(\ox+\al tw))+\partial g(\ox+\al tw)=\Phi^*\nabla f(\Phi\ox)+\partial g(\ox+\al tw)\ni 0. 
\]
Hence $\ox+\al tw$ is an optimal solution of problem \eqref{CP}. Since $\ox$ is the unique solution of \eqref{CP}, we have $\ox=\ox+\al tw$, i.e., $w=0$. This implies condition \eqref{KR}.

Conversely, suppose that condition~\eqref{KR} holds. Let $\tilde{x}$ be any optimal solution of problem~\eqref{CP}. It follows that the whole segment $[\ox,\tilde{x}]$ belong to the solution set of \eqref{CP}. Setting $w=\tilde{x}-\ox$, there exists $t_0>0$ sufficiently small such that $\ox+t_0w\in \Phi^{-1}\left({\rm int}\, (\dom f)\right)$. Moreover,
\begin{equation}\label{Inl}
-\Phi^*\nabla f( \Phi \ox)\in \partial g(\ox)\quad  {\rm and}\quad  -\Phi^*\nabla f( \Phi (\ox+t_0w))\in \partial g(\ox+t_0w).
\end{equation}
Due to the monotonicity of $\partial g$, we have
\begin{equation}\label{Mn}
0\le \la -\Phi^*\nabla f( \Phi (\ox+t_0w))+\Phi^*\nabla f( \Phi \ox),t_0w\ra= t_0\la -\nabla f( \Phi (\ox+t_0w))+\nabla f( \Phi \ox),\Phi w\ra.
\end{equation}
As $f$ is twice continuously differentiable in ${\rm int}\, (\dom f)$, we obtain from the mean-value theorem that 
\begin{equation}\label{nabla}
\la\nabla f( \Phi (\ox+t_0w))-\nabla f( \Phi \ox),\Phi w\ra=t_0\int_0^1\la \nabla^2 f(\Phi(\ox+tt_0w))\Phi w,\Phi w\ra dt.
\end{equation}
Since the Hessian mapping $\nabla^2 f$ is positive definite and continuous on ${\rm int}\, (\dom f)$,  there exists some $\mu>0$ such that 
\[
\la\nabla^2 f(\Phi(\ox+tt_0w))\Phi w,\Phi w\ra \ge \mu \|\Phi w\|^2
\]
for all $t\in [0,1]$. Combining this with \eqref{nabla} and \eqref{Mn} gives us that $\Phi w=0$, i.e., $w\in \Ker \Phi.$ Observe from the inclusion \eqref{Inl} that 
\[
\oy=-\Phi^*\nabla f( \Phi \ox)=-\Phi^*\nabla f( \Phi (\ox+t_0w))\in \partial g(\ox+t_0w),
\]
which implies that $\ox+t_0w\in \partial g^*(\oy)$, i.e., $w\in \mathcal{R}_{\partial g^*(\oy)}(\ox)$. As condition~\eqref{KR} is satisfied, we have $w=0$. Thus $\tilde{x}=\ox$ which entails uniqueness of $\ox$.
\end{proof}

When the function $g$ additionally satisfies the quadratic growth condition in Definition~\ref{def:QGC} at $\ox$ for $\oy=-\Phi^*\nabla f(\Phi\ox)$, the recent paper \cite[Corollary~3.4]{FNP23} shows that the following condition
\begin{equation}\label{Strong}
\Ker \Phi\cap \mathcal{T}_{\partial g^*(\oy)}(\ox)=\{0\}
\end{equation}
characterizes strong minima at the optimal solution $\ox$, where $\mathcal{T}_{\partial g^*(\oy)}(\ox)$ is the tangent cone \eqref{RT} to $\partial g^*(\oy)$ at $\ox$. As the tangent cone is the closure of the radical cone. Condition~\eqref{Strong} coincides with condition~\eqref{KR} when the radial cone $\mathcal{R}_{\partial g^*(\oy)}(\ox)$ is closed.  This observation allows us to establish the equivalence between solution uniqueness and strong minima for problem~\eqref{CP} in the following result. 

\begin{Corollary}[Strong minima and solution uniqueness]\label{StrUnq} Suppose that $\ox\in \Phi^{-1}\left({\rm int}\, (\dom f)\right)\cap \dom g$ is an optimal solution of problem~\eqref{CP} and the function $g$ satisfies the quadratic growth condition at $\ox$ for $\oy=-\Phi^*\nabla f(\Phi\ox)$. Suppose further that the radial cone $\mathcal{R}_{\partial g^*(\oy)}(\ox)$ is closed. Then $\ox$ is a strong solution of problem~\eqref{CP} if and only if it is a unique solution. 
\end{Corollary}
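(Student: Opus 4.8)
The plan is to combine Theorem~\ref{UniRa} with the characterization of strong minima from \cite[Corollary~3.4]{FNP23} via the bridge provided by the closedness hypothesis. Concretely, I would argue as follows. Since $g$ is a proper l.s.c.\ convex function and $\ox$ is feasible with $\oy=-\Phi^*\nabla f(\Phi\ox)\in\partial g(\ox)$ (which is exactly the first-order optimality condition, already in force because $\ox$ is assumed optimal), both results apply at this point. One direction is trivial and does not need any hypothesis: for convex optimization a strong solution is automatically the unique solution, since $\varphi(x)-\varphi(\ox)\ge\frac{\kk}{2}\|x-\ox\|^2>0$ for $x$ near $\ox$, $x\neq\ox$, and convexity of $\varphi$ propagates strict inequality to all of $\XX$ (if some $\tilde x\neq\ox$ had $\varphi(\tilde x)=\varphi(\ox)$, the whole segment $[\ox,\tilde x]$ would be optimal, contradicting local strictness). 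So the content is the converse.

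For the converse, suppose $\ox$ is the unique solution of \eqref{CP}. By Theorem~\ref{UniRa}, this is equivalent to
\[
\Ker\Phi\cap\mathcal{R}_{\partial g^*(\oy)}(\ox)=\{0\}.
\]
Now invoke the closedness hypothesis: $\mathcal{R}_{\partial g^*(\oy)}(\ox)$ is closed, hence by the definition of the tangent cone in \eqref{RT} as the closure of the radial cone, $\mathcal{T}_{\partial g^*(\oy)}(\ox)=\mathcal{R}_{\partial g^*(\oy)}(\ox)$. Therefore the displayed condition is literally identical to condition~\eqref{Strong}, i.e.\
\[
\Ker\Phi\cap\mathcal{T}_{\partial g^*(\oy)}(\ox)=\{0\}.
\]
Since $g$ satisfies the quadratic growth condition at $\ox$ for $\oy$, \cite[Corollary~3.4]{FNP23} tells us that \eqref{Strong} is exactly the characterization of $\ox$ being a strong solution of \eqref{CP}. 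Hence $\ox$ is a strong solution, completing the equivalence.

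I expect essentially no obstacle here: the corollary is a formal three-way chaining (solution uniqueness $\Leftrightarrow$ \eqref{KR}; closedness $\Rightarrow$ \eqref{KR} $=$ \eqref{Strong}; \eqref{Strong} $\Leftrightarrow$ strong minima under quadratic growth), and the "strong $\Rightarrow$ unique" direction is standard convexity. The only point that deserves a sentence of care is verifying that the hypotheses of the two invoked results are met at the same point $\ox$ with the same multiplier $\oy$ — in particular that $\ox\in\Phi^{-1}({\rm int}\,(\dom f))\cap\dom g$ and that $\oy=-\Phi^*\nabla f(\Phi\ox)\in\partial g(\ox)$, both of which are part of the hypotheses or follow from optimality of $\ox$. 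If one wanted to be fully self-contained rather than cite \cite{FNP23}, the real work would be reproving that the quadratic growth condition on $g$ transfers to a quadratic growth / strong minimum property of $\varphi=f\circ\Phi+g$ along directions in $\Ker\Phi$ while the positive-definite Hessian of $f$ supplies the quadratic lower bound transverse to $\Ker\Phi$ — but since that corollary is available in the excerpt, I would simply cite it.
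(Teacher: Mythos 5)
Your proposal is correct and matches the paper's own argument, which is given in the paragraph preceding the corollary: uniqueness is equivalent to \eqref{KR} by Theorem~\ref{UniRa}, closedness of the radial cone makes \eqref{KR} identical to \eqref{Strong}, and \eqref{Strong} characterizes strong minima under quadratic growth by \cite[Corollary~3.4]{FNP23}. The extra paragraph on the ``strong $\Rightarrow$ unique'' direction is harmless but redundant, since the chain of equivalences already yields both implications.
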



\begin{Remark}[Closedness of the radial cone]\label{Cl12}{\rm The radial cone $\mathcal{R}_{\partial g^*(\oy)}(\ox)$ is closed if and only if  
\begin{equation}\label{RdCl}
  \cone\left(\partial g^*(\oy)-\ox\right) \mbox{ is closed}.  
\end{equation}
This condition is automatic when $\partial g^*(\oy)$ is a polyhedral set. A particular example of functions 
satisfying both \eqref{RdCl} and the quadratic growth condition in Corollary~\ref{StrUnq} is the class of {\em convex piecewise linear-quadratic functions} \cite[Definition~10.20]{RW98}, i.e., $\dom g$ is a union of finitely many polyhedral sets, relative to each of which the function $g(x)$ has a convex quadratic expression; see also \cite[Proposition~10.21 and Theorem~11.24]{RW98}. This class of functions covers some important regularizers such as  $\ell_1$ (sparsity) norm, the {\em anisotropic total variation}, and the {\em elastic net regularizer} widely used in optimization, statistical/machine learning, and signal processing. The equivalence between solution uniqueness and strong minima for this class was observed in \cite{BLN22}. That allows \cite{BLN22} to obtain some new characterizations of solution uniqueness for $\ell_1$ optimization problems (Lasso in particular) by using second-order analysis for strong minima, different from those in \cite{F05,T13,ZYC15,ZYY16,G17}. 

Another important regularizer that is not convex piecewise linear-quadratic is the $\ell_1/\ell_2$  norm (a.k.a. group Lasso regularizer) defined by
\begin{equation}\label{g12}
g(x)=\|x\|_{1,2}\eqdef\sum_{J\in \mathcal{J}} \|x_J\|\quad \mbox{for}\quad x\in \XX=\R^n,
\end{equation}
where $\mathcal{J}$ is a partition of $\{1,2,\ldots,n\}$ with $p$ distinct groups. It is shown in \cite{ZS17} that $\partial g^*(\oy)$ is a polyhedral set. Thus the radial cone $\mathcal{R}_{\partial g^*(\oy)}(\ox)$ is also closed. It is also known that the $\ell_1/\ell_2$ norm satisfies the quadratic growth condition at $\ox$ for $\oy$; see also \cite{ZS17}. By Corollary~\ref{StrUnq} again, $\ox$ is a strong solution of group Lasso problem~\eqref{CP} if and only if it is a unique solution. This observation was obtained recently in \cite[Theorem~5.3]{FNT21} by a different approach via second-order analysis and the computation of the descent cone of the $\ell_1/\ell_2$ norm. 

Another situation where condition \eqref{RdCl} holds is when the {\em dual nondegeneracy condition} is satisfied in the sense that 
\begin{equation}\label{DuNon}
\ox\in {\rm ri}\, \partial g^*(\oy).
\end{equation}
Indeed, this condition means that $\cone\left(\partial g^*(\oy)-\ox\right)$ is the subspace in $\XX$ parallel to $\partial g^*(\oy)$, which is closed in finite dimension. \hfill$\triangle$
}    
\end{Remark}

\subsection{Nuclear norm case}
We now study a major class of problem~\eqref{CP}, the nuclear norm optimization problem for low-rank matrix recovery: 
\begin{equation}\label{NNM1}
    \min_{X\in \R^{n_1\times n_2}}\quad f(\Phi X)+\|X\|_*, 
\end{equation}
where $\Phi:\R^{n_1\times n_2}\to \R^m$ is a linear operator ($n_2\ge n_1$), the function $f:\R^m\to \oR$ is a proper l.s.c. convex function satisfying the standing assumptions \eqref{aA}-\eqref{aB}, and $g(X)=\|X\|_*$ is the nuclear norm of $X\in \R^{n_1\times n_2}$, which is the sum of all singular values of $X$. 
The dual norm of nuclear norm is known as the {\em spectral norm} $\|X\|$, the largest singular value of $X$. 

We work under the Forbenius inner product in $\R^{n_1\times n_2}$
\[
\la X,Y\ra={\rm Tr}\, (X^TY)\quad \mbox{for all}\quad X,Y\in \R^{n_1\times n_2},
\]
where ${\rm Tr}\,(\cdot)$ is the trace operator in $\R^{n_1\times n_2}$. 
This induces the Frobenius norm in $\R^{n_1\times n_2}$ as follows
\[
\|X\|_F\eqdef\sqrt{\la X,X\ra}=\sqrt{{\rm Tr}\, (X^TX)}\quad \mbox{for all}\quad X\in \R^{n_1\times n_2}.
\]
We recall that for $\OX\in \R^{n_1\times n_2}$, the singular valued decomposition (SVD, in brief) of $\OX$ reads
\begin{equation}\label{OX}
\Bar X=U\begin{pmatrix}\Bar \Sigma_r &0\\
0& 0\end{pmatrix}_{n_1\times n_2}V^T \quad \mbox{with}\quad
\Bar\Sigma_r=\begin{pmatrix}\sigma_1(\OX)& \dots &0\\
\vdots&\ddots &\vdots\\
0 &\ldots & \sigma_r(\OX)\end{pmatrix},
\end{equation}
where $r={\rm rank}\, (\Bar X)$, $U\in \R^{n_1\times n_1}$ and $V\in \R^{n_2\times n_2}$ are orthogonal matrices, and $\sigma_i(\OX)$ are the ordered positive singular values of $\OX$, i.e. $\sigma_1(\OX) \ge\sigma_2(\OX)\ge \ldots\ge \sigma_r(\OX)>0$.  Denote $\mathcal{O}(\OX)$ by the set of all such pairs $(U,V)$ satisfying \eqref{OX}. We write $U=\begin{pmatrix} U_I&U_J\end{pmatrix}$ and $V=\begin{pmatrix} V_I&V_K\end{pmatrix}$, where  $U_I$ and $V_I$ are the submatrices of the first $r$ columns of  $U$ and $V$, respectively. It follows  from \eqref{OX} that $\OX=U_I\Bar\Sigma_rV_I^T$, which is known as a {\em compact or reduced SVD} of $\OX$.

The subdifferential of the nuclear norm was computed in  \cite[Example~2]{W92}. The subdifferential of its Legendre-Fenchel conjugate (i.e. the normal cone to the spectral ball) was formalized in \cite[Proposition 10]{ZS17}, which can be obtained from \cite[Example~1]{W92} via classical convex analysis calculus, i.e. the formula of the normal cone to a level set \cite[Corollary 23.7.1]{R70}. We summarize these results in the following lemma which plays an important role in our paper. 

\begin{Lemma}[Subdifferentials of the nuclear norm and its conjugate]\label{Lem} The subdifferential of the  nuclear norm at $\OX\in \R^{n_1\times n_2}$ is computed by
\begin{equation}\label{subdif}
    \partial\|\OX\|_*=\left\{U\begin{pmatrix} \Id_r&0\\ 0 & W\end{pmatrix}V^T|\; \|W\|\le 1\right\} \quad \mbox{for any}\quad (U,V)\in \mathcal{O}(\OX). 
\end{equation}
Moreover, $\OY\in \partial\|\OX\|_*$ if and only if $\|\OY\|\le 1$ and 
\begin{equation}\label{Fen}
    \|\OX\|_*=\la \OY,\OX\ra. 
\end{equation}
Furthermore, for any $\OY\in\B\eqdef\{Z\in \R^{n_1\times n_2}|\;\|Z\|\le 1\}$, we have 
\begin{equation}\label{Inver}
\partial g^*(\OY)=U\begin{pmatrix}\mathbb{S}_+^{p(\OY)} &0\\0 &0\end{pmatrix}V^T\quad \mbox{for any}\quad (U,V)\in \mathcal{O}(\OY),
\end{equation}
where $\mathbb{S}_+^p$ is the set of all $p\times p$  symmetric positive semidefinite matrices and  $p(\OY)$ is defined by
\begin{equation}\label{p}
p(\OY)\eqdef\#\{i|\; \sigma_i(\OY)=1\}.
\end{equation}
\end{Lemma}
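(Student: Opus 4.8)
The plan is to prove the three assertions in order, drawing each from the cited sources but spelling out the convex-analysis calculus. For the first formula \eqref{subdif}, I would start from the known characterization of the subdifferential of the nuclear norm as $\partial\|\OX\|_* = \{\OY : \|\OY\|\le 1,\ \la\OY,\OX\ra = \|\OX\|_*\}$ (this is the general fact that for a norm $\|\cdot\|$ with dual norm $\|\cdot\|_\circ$, $\partial\|\ox\| = \{v : \|v\|_\circ\le 1,\ \la v,\ox\ra = \|\ox\|\}$, valid at $\ox\ne 0$; the case $\OX=0$ gives the whole unit ball, consistent with \eqref{subdif} with $r=0$). Fixing $(U,V)\in\mathcal{O}(\OX)$ and writing $\OY$ in the block form $\OY = U\left(\begin{smallmatrix} A & B\\ C & W\end{smallmatrix}\right)V^T$, I would compute $\la\OY,\OX\ra = \Tr(\Bar\Sigma_r A)$ and observe that $\la\OY,\OX\ra = \sum_{i=1}^r \sigma_i(\OX)$ forces, together with $\|\OY\|\le1$ and the fact that each diagonal entry of $A$ is at most the largest singular value of $\OY$ hence at most $1$, that $A = \Id_r$; then a standard argument (e.g. via the SVD of the blocked matrix, or the observation that an operator-norm-$\le1$ matrix with an identity block must be block-diagonal) shows $B=0$ and $C=0$, leaving $\|W\|\le1$ as the only remaining constraint. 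Conversely every such matrix has spectral norm $\le1$ and pairs correctly with $\OX$, giving \eqref{Fen} as a byproduct.

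For \eqref{Inver}, I would use the duality relation recalled just before the lemma: $\partial g^*(\OY) = (\partial g)^{-1}(\OY) = \{X : \OY\in\partial\|X\|_*\}$. Since $g^* = \delta_\B$ is the indicator of the spectral ball, $\partial g^*(\OY) = N_\B(\OY)$, the normal cone. When $\|\OY\|<1$ this is $\{0\}$, which matches \eqref{Inver} because then $p(\OY)=0$ and $\mathbb{S}_+^0 = \{0\}$. When $\|\OY\|=1$, write the SVD of $\OY$ using $(U,V)\in\mathcal{O}(\OY)$, so the first $p:=p(\OY)$ singular values equal $1$; then $\B = \{Z : \|Z\|\le 1\}$ is a level set of the spectral norm, and by the formula for the normal cone to a sublevel set, $N_\B(\OY) = \R_+\,\partial\|\OY\| = \cone\,\partial\|\OY\|$. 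Feeding in \eqref{subdif} applied at $\OY$ (whose reduced SVD has rank $p$), a matrix $X\in\partial\|X\|_*$ with $\OY\in\partial\|X\|_*$ must, by the first part, have SVD-pair compatible with $(U,V)$ and take the form $U\left(\begin{smallmatrix} D & 0\\ 0 & 0\end{smallmatrix}\right)V^T$ with $D$ diagonal positive; scaling by $\R_+$ and taking the cone/closure considerations shows the relevant set of $D$'s is exactly $\mathbb{S}_+^p$ — here I must be a little careful that a general element of $\mathbb{S}_+^p$ need not be diagonal, so the correct statement is that $X$ ranges over $U\left(\begin{smallmatrix} M & 0\\ 0 & 0\end{smallmatrix}\right)V^T$ as $M$ ranges over $\mathbb{S}_+^p$, using the freedom in the choice of $(U,V)\in\mathcal{O}(\OY)$ to diagonalize $M$. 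This last point — reconciling the ``diagonal'' normal-cone description with the ``symmetric PSD block'' description via the non-uniqueness of the SVD — is exactly where I expect the only real subtlety to lie; everything else is bookkeeping with block matrices and the trace inner product.

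The main obstacle, then, is not any single hard estimate but rather handling the SVD non-uniqueness cleanly: the set $\mathcal{O}(\OX)$ has a block-diagonal orthogonal symmetry (one may right-multiply $U_I$ and $V_I$ by the same orthogonal matrix when singular values coincide, and more in the kernel blocks), and I need the block-matrix manipulations to be invariant under, or to correctly exploit, this symmetry so that the final formulas \eqref{subdif} and \eqref{Inver} are well-posed (independent of the chosen pair) and sharp. I would organize the writeup so that all the block-form computations are done for one fixed pair $(U,V)$, and invariance is noted at the end. Since the lemma is quoted as essentially known from \cite{W92,ZS17,R70}, I would keep the proof short, citing \cite[Example~2]{W92} and \cite[Corollary~23.7.1]{R70} for the normal-cone-to-a-level-set step and only expanding the parts needed to put the results in the stated block form.
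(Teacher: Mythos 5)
Your proposal is correct and follows exactly the route the paper itself points to: the dual-norm characterization of $\partial\|\OX\|_*$ (as in \cite[Example~2]{W92}) combined with the block-matrix rigidity argument for \eqref{subdif} and \eqref{Fen}, and the normal cone to the spectral ball obtained as the cone generated by the spectral-norm subdifferential via \cite[Corollary~23.7.1]{R70} for \eqref{Inver}. The paper gives no proof of this lemma, stating it as a summary of known results from \cite{W92,ZS17,R70}, so your sketch -- including the correct identification of the one real subtlety, namely using the orthogonal freedom in $\mathcal{O}(\OY)$ within the singular-value-one block to pass between the diagonal and the general symmetric PSD descriptions in \eqref{Inver} -- supplies precisely the details the paper delegates to the references.
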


 Let $\Bar Y\in \partial \|\Bar X\|_*$ and $(U,V)\in \mathcal{O}(\OX)$. Note  from \eqref{subdif} that $\OY$  can be written as
\begin{equation}\label{OY}
\Bar Y=U\begin{pmatrix}\Id_r &0\\
0& \Bar W\end{pmatrix}V^T
\end{equation}
with some $\Bar W\in \R^{(n_1-r)\times (n_2-r)}$ satisfying $\|\Bar W\|\le 1$. 
Let $(\Hat U,\Hat V)\in \mathcal{O}(\Bar W)$ and $\Hat U\Sigma \Hat V^T$ be a full SVD of $\OW$.  We get from \eqref{OY} that 
\begin{equation}\label{OY2}
\Bar Y=\OU\begin{pmatrix}\Id_r &0\\
0& \Sigma\end{pmatrix}\OV^T\quad \mbox{with}\quad \OU\eqdef(U_I\; U_J\Hat U)\quad \mbox{and}\quad \OV\eqdef(V_I \; V_K\Hat V).
\end{equation}
Observe that  $\OU^T\OU=\Id_{{n_1}}$ and $\OV^T\OV=\Id_{n_2}$. It follows that $(\OU,\OV)\in \mathcal{O}(\OX)\cap \mathcal{O}(\OY)$, which means $\OX$ and $\OY$ have {\em simultaneous ordered SVD} \cite{LS05a,LS05b} with orthogonal pair $(\OU,\OV)$ in the sense that 
\begin{equation}\label{SSVD}
    \OX=\OU({\rm Diag}\,\sigma(\OX))\OV^T\qquad \mbox{and}\qquad \OY=\OU({\rm Diag}\,\sigma(\OY))\OV^T,
\end{equation}
where $\sigma(\OX)\eqdef\left (\sigma_1(\OX), \ldots,\sigma_{n_1}(\OX)\right)^T$ containing all the singular values of $\OX$ in decreasing order and 
$${\rm Diag}\,\sigma(\OX)\eqdef\begin{pmatrix}\sigma_1(\OX)&\ldots&0&0&\ldots& 0\\
0&\ddots&0&0&\ldots& 0\\ 
0&\ldots &\sigma_{n_1}(\OX)&0&\ldots 
&0\end{pmatrix}_{n_1\times n_2}.$$

In the spirit of Theorem~\ref{UniRa}, to study the solution uniqueness of problem~\eqref{NNM1} we have to compute the radial cone of the nuclear norm. Its explicit formula below is helpful for further studies in this paper and also for the numerical experiments when checking solution uniqueness in Section~\ref{NE}. 

\begin{Lemma}\label{RC} Let $\OY\in \partial \|\OX\|_*$. Suppose that $\OX$ and $\OY$ have the  simultaneous ordered  SVD with an orthogonal matrix pair $(U,V)\in \mathcal{O}(\OX)\cap \mathcal{O}(\OY)$. Then we have
\begin{equation}\label{Rd}
\mathcal{R}_{\partial g^*(\OY)}(\OX)=\left\{U\begin{pmatrix}A&BC&0\\C^TB^T&C&0\\0&0&0\end{pmatrix}V^T\in \R^{n_1\times n_2}|\; A\in \mathbb{S}^r,B\in \R^{r\times (p-r)}, C\in \mathbb{S}_+^{p-r}\right\},
\end{equation}
where $\mathbb{S}^r$ is the set of $r \times r$ symmetric matrices, $r=\rank(\OX)$ and $p=p(\OY)$ is defined in \eqref{p}.
\end{Lemma}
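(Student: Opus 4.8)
The plan is to compute the radial cone $\mathcal{R}_{\partial g^*(\OY)}(\OX) = \mathbb{R}_+(\partial g^*(\OY) - \OX)$ directly from the explicit description of $\partial g^*(\OY)$ in Lemma~\ref{Lem}. By \eqref{Inver} and the hypothesis that $(U,V)\in\mathcal{O}(\OX)\cap\mathcal{O}(\OY)$, we have $\partial g^*(\OY) = U\begin{pmatrix} \mathbb{S}_+^{p} & 0\\ 0 & 0\end{pmatrix}V^T$ where $p = p(\OY)$, and moreover the simultaneous SVD \eqref{SSVD} tells us that $\OX = U\begin{pmatrix}\Bar\Sigma_r & 0 & 0\\ 0 & 0 & 0\\ 0 & 0 & 0\end{pmatrix}V^T$ where the block decomposition is into sizes $r$, $p-r$, $n_1-p$ (using that $\sigma_i(\OX)>0$ exactly for $i\le r$ and $\sigma_i(\OY)=1$ exactly for $i\le p$, with $r\le p$ since $\OY\in\partial\|\OX\|_*$). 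So conjugating by $U,V$ reduces everything to a statement about $\mathbb{R}_+(\mathbb{S}_+^p - \widehat\Sigma)$ where $\widehat\Sigma = \mathrm{Diag}(\sigma_1(\OX),\dots,\sigma_r(\OX), 0,\dots,0)\in\mathbb{S}^p$ is block-diagonal with a positive definite $r\times r$ block $\Bar\Sigma_r$ and a zero $(p-r)\times(p-r)$ block (the ambient $n_1\times n_2$ matrix has zero rows/columns outside the top-left $p\times p$ corner, matching the third row/column of zeros in \eqref{Rd}).

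The core lemma I would isolate and prove is: for $\widehat\Sigma\in\mathbb{S}^p$ of the form $\begin{pmatrix}\Bar\Sigma_r & 0\\ 0 & 0\end{pmatrix}$ with $\Bar\Sigma_r\succ 0$,
\[
\mathbb{R}_+\big(\mathbb{S}_+^p - \widehat\Sigma\big) = \left\{\begin{pmatrix} A & BC\\ C^TB^T & C\end{pmatrix} : A\in\mathbb{S}^r,\ B\in\mathbb{R}^{r\times(p-r)},\ C\in\mathbb{S}_+^{p-r}\right\}.
\]
For the inclusion $\subseteq$: take $t>0$ and $S = \begin{pmatrix} P & Q\\ Q^T & R\end{pmatrix}\in\mathbb{S}_+^p$; then $t(S-\widehat\Sigma) = \begin{pmatrix} t(P-\Bar\Sigma_r) & tQ\\ tQ^T & tR\end{pmatrix}$. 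The $(2,2)$ block $tR$ lies in $\mathbb{S}_+^{p-r}$ (set $C = tR$), the $(1,1)$ block is an arbitrary symmetric matrix once we allow $P$ to vary, and the crucial point is the relation between the off-diagonal block and $C$: since $S\succeq 0$, a standard Schur-complement / range argument gives $\mathrm{Im}(Q)\subseteq\mathrm{Im}(R)$, hence $Q = RB'$ for some $B'$; transposing, $tQ = tRB' = C B'$, so the off-diagonal block has the form $CB'{}^T{}^T$ — writing $B = B'$ this is exactly the $BC$ pattern after transposing appropriately, i.e. $tQ^T = B'{}^T C$, matching $C^TB^T$. One has to be slightly careful that $C$ is only PSD not PD, so the factorization $Q = RB'$ uses $\mathrm{Im}(Q)\subseteq\mathrm{Im}(R)$ rather than invertibility; this is the one genuine subtlety. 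For the reverse inclusion $\supseteq$: given $A, B, C$ with $C\succeq 0$, I need to exhibit $t>0$ and $S\succeq 0$ with $t(S-\widehat\Sigma)$ equal to the target matrix, i.e. $S = \widehat\Sigma + \tfrac1t\begin{pmatrix} A & BC\\ C^TB^T & C\end{pmatrix} = \begin{pmatrix} \Bar\Sigma_r + A/t & BC/t\\ C^TB^T/t & C/t\end{pmatrix}$. For $t$ large the $(1,1)$ block $\Bar\Sigma_r + A/t$ becomes positive definite (since $\Bar\Sigma_r\succ 0$), and then the Schur complement of that block in $S$ is $C/t - (C^TB^T/t)(\Bar\Sigma_r+A/t)^{-1}(BC/t) = \tfrac1t\big(C - \tfrac1t C^TB^T(\Bar\Sigma_r+A/t)^{-1}BC\big)$, which for $t$ large is $\succeq 0$ because the correction term is $O(1/t^2)$ relative to $C$ and, again using $\mathrm{Im}$ containment, the correction lives inside $\mathrm{Im}(C)$ so it cannot destroy positive semidefiniteness on the kernel of $C$ — a Schur-complement criterion for PSD (valid in the singular case provided the range condition holds) then gives $S\succeq 0$.

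Having established the core lemma, the proof of Lemma~\ref{RC} is just bookkeeping: $\mathcal{R}_{\partial g^*(\OY)}(\OX) = U\,[\,\mathbb{R}_+(\mathbb{S}_+^p-\widehat\Sigma)\ \text{padded with zero rows/columns}\,]\,V^T$, and substituting the core lemma's description yields precisely \eqref{Rd}, with the third (size $n_1-p$) row and column of zeros coming from the zero padding and the fact that $\partial g^*(\OY)$ only charges the top-left $p\times p$ corner while $\OX$ is supported on the top-left $r\times r\subseteq p\times p$ corner. I expect the main obstacle to be handling the PSD Schur-complement arguments cleanly in the \emph{singular} case (when $C$, equivalently the $(2,2)$ block, is only positive semidefinite), where one must invoke the generalized Schur complement and the range inclusion $\mathrm{Im}(Q)\subseteq\mathrm{Im}(R)$ rather than naively inverting blocks; everything else (the reduction via simultaneous SVD, the conjugation-invariance of the radial cone, the block-size bookkeeping) is routine.
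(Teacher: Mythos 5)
Your proposal is correct and follows essentially the same route as the paper: reduce via the simultaneous SVD to a computation of $\R_+\bigl(\mathbb{S}_+^p-\widehat\Sigma\bigr)$, obtain the $BC$ structure of the off-diagonal block from the kernel/range inclusion forced by positive semidefiniteness, and verify the reverse inclusion with the Schur complement of the (eventually positive definite) $(1,1)$ block together with a diagonalization of $C$ to handle its possible singularity. The only notable difference is that you invoke the factorization $Q = BC$ directly from $\Ker C\subseteq\Ker Q$, whereas the paper reaches the same conclusion through a separation-theorem argument; this is a minor streamlining, not a different method.
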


\noindent{\bf Proof.} To verify the inclusion ``$\subset$'' in \eqref{Rd}, pick any $W\in \mathcal{R}_{\partial g^*(\OY)}(\OX)\setminus \{0\}$. Thus, there exists $\nu>0$ such that $\OX+\nu W\in \partial g^*(\OY)$. By formulae \eqref{OX} and \eqref{Inver}, there exist $A\in \mathbb{S}^r$, $D\in \R^{r\times(p-r)}$, and $C\in \mathbb{S}^{p-r}_+$ such that $W=U\begin{pmatrix}A&D&0\\D^T&C&0\\0&0&0\end{pmatrix}V^T$. As $\partial g^*(\OY)$ is a convex set and $\OX\in \partial g^*(\OY)$, we have $\OX+tW\in \partial g^*(\OY)$ for any $t\in (0,\nu)$. If follows from \eqref{Inver} that
\begin{equation}\label{Pos}
\begin{pmatrix}\Bar\Sigma_r+tA&tD\\tD^T&tC\end{pmatrix}\in \mathbb{S}^p_+.
\end{equation} 

Let $\lm_{\min}:\mathbb{S}^r\to \R$ be the smallest eigenvalue function. It is well-known that this function is $1$-Lipschitz continuous with respect to the Frobenius norm. As $\lm_{\min}(\Bar\Sigma_r)=\sigma_r(\OX)>0$, we have $\lm_{\min}(\Bar\Sigma_r+tA)>0$ for all $t\in [0,\lm_{\min}(\Bar\Sigma_r)/\|A\|_F)$, i.e.,  the matrix $\Bar\Sigma_r+tA$ is positive definite or $\Bar\Sigma_r+tA\succ0$ for all such $t$. We then get by the Schur complement that 
\begin{equation}\label{Schur}
  \begin{pmatrix}\Bar\Sigma_r+tA&tD\\tD^T&tC\end{pmatrix}\succeq0\quad\mbox{iff}\quad C-tD^T(\Bar\Sigma_r+tA)^{-1}D\succeq0;
\end{equation}
see also \cite[Theorem~7.7.6 and its proof]{HJ85}. 
For any $x\in \Ker C$, we derive from \eqref{Pos} and \eqref{Schur} that 
\[
0\le x^T\left(C-tD^T(\Bar\Sigma_r+tA)^{-1}D\right)x=- tx^TD^T(\Bar\Sigma_r+tA)^{-1}Dx,
\]
which implies $Dx=0$ as $\Bar\Sigma_r+tA\succ0$; i.e. $\Ker C \subset \Ker D$. Define $\mathcal{X}\eqdef\{GC|\; G\in \R^{r\times(p-r)}\}$, a subspace of $\R^{r\times(p-r)}$. We show next that $D\in \mathcal{X}$. Indeed, by contradiction suppose that $D\notin \mathcal{X}$, it follows from the separation theorem that there exists a matrix $H\in \R^{r\times(p-r)}$ such that 
\begin{equation}\label{eq:septhm}
\Tr(DH^T)=\la D,H\ra<0=\min_{G\in \R^{r\times(p-r)}}\la GC,H\ra= \min_{G\in \R^{r\times(p-r)}}\Tr(C^TG^TH)=\min_{G\in \R^{r\times(p-r)}}\Tr(G^THC^T),
\end{equation}
which implies that $HC^T=0$, i.e., $CH^T=0$. Thus each row of $H$ is a vector in $\Ker C$, which implies that $DH^T=0$ since we have shown just before that $\Ker C \subset \Ker D$. But this contradicts \eqref{eq:septhm}. Thus $D\in \mathcal{X}$, i.e.,  there exists $B\in \R^{r\times (p-r)}$ such that  $D=BC$. This verifies the inclusion ``$\subset$'' in \eqref{Rd}. 

To justify the converse inclusion ``$\supset$'' in \eqref{Rd}, pick any $W=U\begin{pmatrix}A&BC&0\\C^TB^T&C&0\\0&0&0\end{pmatrix}V^T\in \R^{n_1\times n_2}$ with  $A\in \mathbb{S}^r$, $B\in \R^{r\times (p-r)}$, and  $C\in \mathbb{S}_+^{p-r}$. Again, arguing as above, we have $\lm_{\min}(\Bar\Sigma_r+tA)>0$ for all $t > 0$ small enough, meaning that $\Bar\Sigma_r+tA$ is nonsingular. According to the Schur complement \eqref{Schur} and formula~\eqref{Inver} again, $\OX+tW\in\partial g^*(\OY)$ if and only if 
\[
C-tC^TB^T(\Bar\Sigma_r+tA)^{-1}BC\succeq0.
\]
As $C\in \mathbb{S}^p_+$, it is diagonalizable. Hence we may write  $C=Z\begin{pmatrix}\Lm&0\\0&0\end{pmatrix}Z^T$ with $\Lm\in \R^{s\times s}$ being a diagonal matrix of positive eigenvalues of $C$ for some $s\le p-r$ and $Z\in \R^{(p-r)\times (p-r)}$ being an orthogonal matrix. It follows that $C-tC^TB^T(\Bar\Sigma_r+tA)^{-1}BC\succeq0$ if and only if 
\begin{equation}\label{Pos2}
0\preceq\begin{pmatrix}\Lm&0\\0&0\end{pmatrix}-t\begin{pmatrix}\Lm&0\\0&0\end{pmatrix}Z^TB^T(\Bar\Sigma_r+tA)^{-1}BZ\begin{pmatrix}\Lm&0\\0&0\end{pmatrix}=\begin{pmatrix}\Lm-t\Lm M_t\Lm &0\\0&0\end{pmatrix},
\end{equation}
where $M_t\in \R^{s\times s}$ is the submatrix of the positive semidefinite matrix $Z^TB^T(\Bar\Sigma_r+tA)^{-1}BZ$ with the same position as $\Lm$ in $\begin{pmatrix}\Lm&0\\0&0\end{pmatrix}$. As $\Bar\Sigma_r\succ0$, $Z^TB^T(\Sigma_0+tA)^{-1}BZ$ and $M_t$ are  continuous mappings with respect to $t$  around $0$. Since $M_t$ is symmetric, we can choose $t>0$ small enough such that $\Lm-t\Lm M_t\Lm\succeq 0$. Thus condition~\eqref{Pos2} is valid, which implies  $C-tC^TB^T(\Bar\Sigma_r+tA)^{-1}BC\succeq0$. This together with the Schur complement \eqref{Schur} and formula \eqref{Inver} tells us that $X_0+tW\in \partial g^*(\OY)$, which implies $W\in \mathcal{R}_{\partial g^*(\OY)}(\OX)$. The proof is complete. \endproof

The following result is a direct consequence of Theorem~\ref{UniRa} and formula \eqref{Rd} of the radial cone. 

\begin{Corollary}[Solution uniqueness of low-rank optimization problems]\label{UniRaN} Suppose that $\OX$ is an optimal solution of problem~\eqref{NNM1} with $\OX\in \Phi^{-1}({\rm int}\, (\dom f))$ and $\OY\eqdef-\Phi^*\nabla f(\Phi \OX)\in \partial \|\OX\|_*$. Then $\OX$ is the unique solution of problem~\eqref{NNM1} if and only if 
\begin{equation}\label{NonEq}
\Phi\left(\OU\begin{pmatrix}A&BC&0\\C^TB^T&C&0\\0&0&0\end{pmatrix}\OV^T\right)=0 ~ \mbox{with} ~ A\in \mathbb{S}^r,B\in \R^{r\times (p-r)}, C\in \mathbb{S}_+^{p-r} \quad \Longrightarrow \quad A=C=0 ,
\end{equation}
where the pair $(\OU,\OV)\in \mathcal{O}(\OX)\cap\mathcal{O}(\OY)$ is from~\eqref{SSVD}, $r={\rm rank}\, (\OX)$, and $p=p(\OY)$ from \eqref{p}.   
\end{Corollary}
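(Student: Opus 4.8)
The statement is a direct specialization of Theorem~\ref{UniRa} and Lemma~\ref{RC}, so the plan is simply to feed the explicit radial-cone formula into the uniqueness criterion. Since $\OX$ is an optimal solution of~\eqref{NNM1} with $\OX\in\Phi^{-1}({\rm int}\,(\dom f))$, the first-order optimality condition yields $\OY\eqdef-\Phi^*\nabla f(\Phi\OX)\in\partial\|\OX\|_*$, and $f$ obeys the standing assumptions~\eqref{aA}--\eqref{aB}; hence Theorem~\ref{UniRa} applied to problem~\eqref{NNM1} (that is, with $g=\|\cdot\|_*$) tells us that $\OX$ is the unique optimal solution if and only if
\[
\Ker\Phi\cap\mathcal{R}_{\partial g^*(\OY)}(\OX)=\{0\}.
\]
It thus remains only to rewrite this intersection condition in terms of the initial data.

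First I would invoke the simultaneous ordered SVD: as recorded in the discussion preceding Lemma~\ref{RC} (see~\eqref{OY2}--\eqref{SSVD}), $\OX$ and $\OY$ possess an orthogonal pair $(\OU,\OV)\in\mathcal{O}(\OX)\cap\mathcal{O}(\OY)$ for which~\eqref{SSVD} holds; fix such a pair. Lemma~\ref{RC} then gives
\[
\mathcal{R}_{\partial g^*(\OY)}(\OX)=\left\{\OU\begin{pmatrix}A&BC&0\\C^TB^T&C&0\\0&0&0\end{pmatrix}\OV^T \;\Big|\; A\in\mathbb{S}^r,\; B\in\R^{r\times(p-r)},\; C\in\mathbb{S}_+^{p-r}\right\},
\]
with $r=\rank(\OX)$ and $p=p(\OY)$. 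Since $\OU$ and $\OV$ are square orthogonal matrices, the displayed matrix $W$ equals $0$ if and only if its middle $p\times p$ block is $0$, i.e.\ if and only if $A=0$, $C=0$ and $BC=0$; the last relation is automatic once $C=0$, so $W=0$ is equivalent to $A=C=0$ (with $B$ unconstrained).

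Putting the two facts together, $\Ker\Phi\cap\mathcal{R}_{\partial g^*(\OY)}(\OX)=\{0\}$ holds exactly when, for all $A\in\mathbb{S}^r$, $B\in\R^{r\times(p-r)}$, $C\in\mathbb{S}_+^{p-r}$, vanishing of $\Phi(W)$ at the corresponding $W$ forces $A=C=0$; this is precisely condition~\eqref{NonEq}, which finishes the argument. There is essentially no technical obstacle here — the statement is a bookkeeping corollary. The only points requiring a little care are that $B$ stays free when $C=0$, so~\eqref{NonEq} rightly concludes ``$A=C=0$'' rather than ``$W=0$'' verbatim, and that one must use the simultaneous-SVD pair of~\eqref{SSVD}, not an arbitrary element of $\mathcal{O}(\OX)$ or $\mathcal{O}(\OY)$, in order for the representation in Lemma~\ref{RC} to apply.
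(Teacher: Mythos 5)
Your proof is correct and follows exactly the route the paper intends: the paper states this corollary as an immediate consequence of Theorem~\ref{UniRa} combined with the radial-cone formula of Lemma~\ref{RC}, which is precisely your argument. Your added remark that $W=0$ is equivalent to $A=C=0$ (since $BC=0$ once $C=0$, leaving $B$ free) is the right bookkeeping point and is consistent with the form of condition~\eqref{NonEq}.
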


\begin{Remark}[Closedness of the radial cone for the nuclear norm]\label{ClNu}{\rm  As the nuclear norm satisfies the quadratic growth condition \cite{CDZ17,ZS17}, Corollary~\ref{StrUnq} tells us that  $\OX$ is a unique solution of problem~\eqref{NNM1} if and only if it is a strong solution provided that the radial cone $\mathcal{R}_{\partial g^*(\OY)}(\OX)$ is closed. Note that its closure is the tangent cone \eqref{RT} computed in \cite[Corollary~4.2]{FNP23}
\begin{equation}\label{Tang}
    \mathcal{T}_{\partial g^*(\OY)}(\OX)=\left\{\OU\begin{pmatrix}A&B&0\\B^T&C&0\\0&0&0\end{pmatrix}\OV^T\in \R^{n_1\times n_2}|\; A\in \mathbb{S}^r,B\in \R^{r\times (p-r)}, C\in \mathbb{S}_+^{p-r}\right\};
\end{equation}
see also \cite[Lemma~2.3 and Proposition~2.1]{LPB21} for a different computation and approach. 
If $p=r$, it is obvious that 
\[
\mathcal{R}_{\partial g^*(\OY)}(\OX)=\mathcal{T}_{\partial g^*(\OY)}(\OX)=\OU\begin{pmatrix}\mathbb{S}^r&0\\0&0\end{pmatrix}\OV^T,
\]
which implies that  the radial cone $\mathcal{R}_{\partial g^*(\OY)}(\OX)$ is closed in this case. 

If $p>r$, we show next that the radial cone $\mathcal{R}_{\partial g^*(\OY)}(\OX)$ is not closed. Indeed, pick
\[
W_k=\OU\begin{pmatrix}0&\mathbb{E}&0\\\mathbb{E}^T&\frac{1}{k}\Id_{p-r}&0\\0&0&0\end{pmatrix}\OV^T\to W=\OU\begin{pmatrix}0&\mathbb{E}&0\\\mathbb{E}^T&0&0\\0&0&0\end{pmatrix}\OV^T
\]
as $k\to \infty$, where $\mathbb{E}\in \R^{r\times (p-r)}$ is the matrix of all entries $1$ and $\Id_{p-r}$ is the $(p-r)\times (p-r)$ identity matrix. Note that $W_k\in \mathcal{R}_{\partial g^*(\OY)}(\OX)$, but $W\notin \mathcal{R}_{\partial g^*(\OY)}(\OX)$. Hence the radial cone $\mathcal{R}_{\partial g^*(\OY)}(\OX)$ is not closed. This situation is very different from the case when the regularizer $g$ is $\ell_1/\ell_2$ norm when the corresponding radial cone is always closed; see Remark~\ref{Cl12}.

Thus the radial cone $\mathcal{R}_{\partial g^*(\OY)}(\OX)$ is  closed if and only if $p=r$. This is the case of dual nondegeneracy condition \eqref{DuNon} discussed in Remark~\ref{Cl12}. Indeed, note from \eqref{Inver} that 
\begin{equation}\label{RiSpec}  
{\rm ri}\, (\partial g^*(\OY)) =\OU\begin{pmatrix}\mathbb{S}_{++}^{p} &0\\0 &0\end{pmatrix}\OV^T,
\end{equation}
where $\mathbb{S}_{++}^{p}$ is the set of all symmetric and positive definite $p\times p$ matrices. As $\OX=\OU\begin{pmatrix}\Bar\Sigma_r &0\\0 &0\end{pmatrix}\OV^T$, $p=r$ if and only if $\OX\in {\rm ri}\, (\partial g^*(\OY))$. Note further from~\eqref{subdif} that 
\begin{equation*}
{\rm ri}\, (\partial \|\OX\|_*)=\left\{U\begin{pmatrix} \Id_r&0\\ 0 & W\end{pmatrix}V^T|\; \|W\|< 1\right\}.
\end{equation*}
Thus, the case $p=r$ is also equivalent to the  {\em nondegeneracy condition}
\begin{equation}\label{NonD}
    \OY\in {\rm ri}\, (\partial \|\OX\|_*). 
\end{equation}
Consequently, the radial cone $\mathcal{R}_{\partial g^*(\OY)}(\OX)$ is  closed if and only if the nondegeneracy condition \eqref{NonD} is satisfied. \hfill{$\triangle$}
}
\end{Remark}

\begin{Remark}[Checking condition \eqref{NonEq} via optimization]\label{Max}{\rm Condition~\eqref{NonEq} means that the following optimization problem 
\begin{equation*}\label{NonOp}
\max\; \frac{1}{2}\|A\|^2_F+\frac{1}{2}\|C\|_F^2\; \;\mbox{s.t.}\; \;\Phi\left(\OU\begin{pmatrix}A&BC&0\\C^TB^T&C&0\\0&0&0\end{pmatrix}\OV^T\right)=0, A\in \mathbb{S}^r,B\in \R^{r\times (p-r)}, C\in \mathbb{S}_+^{p-r}
\end{equation*}
has global maximum value $0$ and global maximizers $(0,\overline B,0)\in \mathbb{S}^r\times \R^{r\times (p-r)}\times \mathbb{S}_+^{p-r}$ for any $\overline B$.
This is a nonlinear concave optimization problem.  Unfortunately, most of available solvers will provide local maximizers. However, if $(0,\overline B,0)\in \mathbb{S}^r\times \R^{r\times (p-r)}\times \mathbb{S}_+^{p-r}$ are local maximizers for any $\overline B$, then $0$ is the global maximum value. Indeed, suppose that $(A,B,C)\in \mathbb{S}^r\times \R^{r\times (p-r)}\times \mathbb{S}_+^{p-r}$ satisfies the first equality in \eqref{NonEq}, hence is a feasible point of the above optimization problem. We observe that $(tA,B,tC)$ is also feasible for any $t \neq 0$. By choosing sufficiently small $|t| > 0$,  $(tA,B,tC)$ is in a small neighborhood of $(0,B,0)$. If  $(0,B,0)$ is a local maximizer of the above optimization problem, we have  
\[
t^2\|A\|^2_F+t^2\|C\|^2_F\le 0,
\]
which implies $A=0$ and $C=0$. Hence $0$ is the global maximum value.  \hfill{$\triangle$}
}
\end{Remark}

\begin{Corollary}[Strong minima vs solution uniqueness]\label{StrUnq2} Let $\OX\in \Phi^{-1}({\rm int}\, (\dom f))$ be an optimal solution of problem~\eqref{NNM1}. Suppose that Nondegeneracy Condition \eqref{NonD} is satisfied. Then $\OX$ is a unique solution of problem~\eqref{NNM1} if and only if it is a strong solution. 
\end{Corollary}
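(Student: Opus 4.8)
The plan is to deduce Corollary~\ref{StrUnq2} directly from Corollary~\ref{StrUnq} by verifying its two hypotheses in the nuclear norm setting. Recall that Corollary~\ref{StrUnq} asserts the equivalence between solution uniqueness and strong minima for problem~\eqref{CP} provided the regularizer $g$ satisfies the quadratic growth condition at $\OX$ for $\OY=-\Phi^*\nabla f(\Phi\OX)$ and the radial cone $\mathcal{R}_{\partial g^*(\OY)}(\OX)$ is closed. So the whole task reduces to checking these two properties for $g(X)=\|X\|_*$ under the Nondegeneracy Condition~\eqref{NonD}.

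First I would invoke the known fact, already cited in the excerpt (see \cite{CDZ17,ZS17} and Remark~\ref{ClNu}), that the nuclear norm satisfies the quadratic growth condition at any point $\OX$ for any $\OY\in\partial\|\OX\|_*$; this takes care of the first hypothesis with no further work. Second, I would use the analysis carried out in Remark~\ref{ClNu}: under the Nondegeneracy Condition~\eqref{NonD}, i.e. $\OY\in\ri(\partial\|\OX\|_*)$, one has (via~\eqref{subdif} and~\eqref{RiSpec}) that $p=p(\OY)=r=\rank(\OX)$, and in that case Lemma~\ref{RC} gives
\[
\mathcal{R}_{\partial g^*(\OY)}(\OX)=\OU\begin{pmatrix}\mathbb{S}^r&0\\0&0\end{pmatrix}\OV^T=\mathcal{T}_{\partial g^*(\OY)}(\OX),
\]
which is a linear subspace of $\R^{n_1\times n_2}$, hence closed. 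With both hypotheses of Corollary~\ref{StrUnq} verified, the equivalence between strong minima and solution uniqueness for problem~\eqref{NNM1} follows at once.

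There is essentially no serious obstacle here, since the corollary is a specialization of Corollary~\ref{StrUnq} combined with the structural computations already assembled in Lemma~\ref{RC} and Remark~\ref{ClNu}. The only point requiring a little care is the translation of the Nondegeneracy Condition~\eqref{NonD} into the statement $p=r$, and from there into the closedness of the radial cone; but this is exactly the chain of equivalences established in Remark~\ref{ClNu} (namely $p=r \iff \OX\in\ri(\partial g^*(\OY)) \iff \OY\in\ri(\partial\|\OX\|_*)$), so one simply cites that remark. Thus the proof is a two-line deduction: quote the quadratic growth property of $\|\cdot\|_*$, quote Remark~\ref{ClNu} for closedness of the radial cone under~\eqref{NonD}, and apply Corollary~\ref{StrUnq}.
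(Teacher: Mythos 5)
Your proposal is correct and follows exactly the paper's own argument: the paper likewise cites the quadratic growth of the nuclear norm from \cite{ZS17,CDZ17}, invokes the closedness of the radial cone under Nondegeneracy Condition~\eqref{NonD} as established at the end of Remark~\ref{ClNu} (via $p=r$ and the resulting subspace formula), and then applies Corollary~\ref{StrUnq}. Nothing is missing.
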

\begin{proof} Suppose that Nondegeneracy Condition \eqref{NonD} is satisfied. As discussed at the end of  Remark~\ref{ClNu}, the radial cone $\mathcal{R}_{\partial g^*(\OY)}(\OX)$ is closed. Note also that the nuclear norm satisfies the quadratic growth condition \eqref{QGC} by \cite{ZS17,CDZ17}. By Corollary~\ref{StrUnq}, $\OX$ is a unique solution of problem~\eqref{NNM1} if and only if it is a strong solution.
\end{proof}

When Nondegeneracy Condition \eqref{NonD} is satisfied, it follows from \cite[Theorem~6.3]{LZ13} that $\OX$ is a strong solution of problem~\eqref{NNM1} if and only if it is a {\em tilt stable} solution of in the sense that the optimal solution set
\[
S(V)\eqdef{\rm argmin}\quad \{f(\Phi X)+\|X\|_*-\la V,X\ra|\; X\in \R^{n_1\times n_2}\} \quad \mbox{for}\quad V\in \R^{n_1\times n_2} 
\]
is single-valued and Lipschitz continuous around $0$ with $S(0)=\{\OX\}$; see also the recent result \cite[Corollary~3.10]{HS23} for similar equivalence. Tilt stability was introduced by Poliquin and Rockafellar \cite{PR98}  for general optimization frameworks. This important stability is characterized in \cite{PR98} via the {\em generalized Hessian}/{\em second-order limiting subdifferential} introduced by Mordukhovich \cite{M92}; see also \cite{AG08,DMN14} for different characterizations for tilt stability. Although the second-order limiting subdifferential has rich calculus \cite{M1,MR12}, its computation is nontrivial in many situations including the nuclear norm function in \eqref{NNM1}. By combining Corollary~\ref{StrUnq2} with the aforementioned \cite[Theorem~6.3]{LZ13} we obtain below the equivalence between  solution uniqueness and tilt stability for problem~\eqref{NNM1} under Nondegeneracy Condition \eqref{NonD}. This equivalence is interesting, as we see that the assumption of solution uniqueness is  already included in the definition of tilt stability ($S(0)=\{\OX\})$. Somewhat Nondegeracy Condition~\eqref{NonD} makes the problem stable. 

\begin{Corollary}[Solution uniqueness and tilt stability]\label{Tilt} Suppose that $\OX\in \Phi^{-1}({\rm int}\, (\dom f))$ is  an optimal solution of problem~\eqref{NNM1} and that Nondegeneracy Condition \eqref{NonD} holds. Then $\OX$ is a tilt-stable solution if and only if it is the unique solution.   
\end{Corollary}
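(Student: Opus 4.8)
The plan is to derive this equivalence purely by chaining two results, with Nondegeneracy Condition \eqref{NonD} serving as the common hypothesis that activates both: on one side Corollary~\ref{StrUnq2}, giving ``unique solution $\Longleftrightarrow$ strong solution'', and on the other side the tilt-stability characterization \cite[Theorem~6.3]{LZ13}, giving ``strong solution $\Longleftrightarrow$ tilt-stable solution''. No new computation with the radial cone is needed; everything has already been prepared in Lemma~\ref{Lem}, Lemma~\ref{RC}, Corollary~\ref{UniRaN}, and Remark~\ref{ClNu}.

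First I would dispose of the implication ``tilt-stable $\Longrightarrow$ unique'', which is immediate and requires no nondegeneracy: if $\OX$ is tilt-stable then the mapping $S(V)$ is single-valued near $0$ with $S(0)=\{\OX\}$, so $\OX$ is the only minimizer of $f(\Phi X)+\|X\|_*$, i.e. the unique solution of problem~\eqref{NNM1}. For the converse, assume $\OX$ is the unique solution. Invoking Corollary~\ref{StrUnq2} (valid here since Nondegeneracy \eqref{NonD} holds, whence by Remark~\ref{ClNu} the radial cone $\mathcal{R}_{\partial g^*(\OY)}(\OX)$ is closed and $p(\OY)=r$, and since the nuclear norm satisfies the quadratic growth condition by \cite{ZS17,CDZ17}), we upgrade uniqueness to the strong-minimum property: there are $\ve>0$ and $\kk>0$ with $f(\Phi X)+\|X\|_*-f(\Phi\OX)-\|\OX\|_*\ge\frac{\kk}{2}\|X-\OX\|_F^2$ for all $X\in\B_\ve(\OX)$. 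It then remains to feed this into \cite[Theorem~6.3]{LZ13}: the nuclear norm is partly smooth at $\OX$ relative to the manifold of matrices of rank $r=\rank(\OX)$ (its subdifferential being described by \eqref{subdif}), the outer data $f$ is $C^2$ near $\Phi\OX$ with positive definite Hessian by \eqref{aA}--\eqref{aB} and $\OX\in\Phi^{-1}(\inte(\dom f))$, and Nondegeneracy \eqref{NonD} is exactly the relative-interior (strict complementarity) requirement $\OY=-\Phi^*\nabla f(\Phi\OX)\in\ri\,\partial\|\OX\|_*$ of that theorem; hence the strong-minimum property at $\OX$ is equivalent to tilt stability at $\OX$, and we conclude.

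The main obstacle is therefore not analytical but a matter of careful identification: one must confirm that the nondegeneracy condition appearing in \cite[Theorem~6.3]{LZ13} translates, in the present notation and for problem~\eqref{NNM1}, into precisely \eqref{NonD}, and that the standing regularity hypotheses of that theorem (partial smoothness and prox-regularity/subdifferential continuity of $\|\cdot\|_*$, together with $C^2$-smoothness of $f$ around $\Phi\OX$) are met. These checks are exactly the content of the discussion preceding the statement and of Lemma~\ref{Lem}, so once they are spelled out the corollary follows at once by combining ``unique $\Longleftrightarrow$ strong'' (Corollary~\ref{StrUnq2}) with ``strong $\Longleftrightarrow$ tilt-stable'' (\cite[Theorem~6.3]{LZ13}).
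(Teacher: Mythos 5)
Your proposal is correct and follows exactly the paper's route: the corollary is obtained by chaining Corollary~\ref{StrUnq2} (unique $\Longleftrightarrow$ strong, valid under Nondegeneracy \eqref{NonD} via closedness of the radial cone and the quadratic growth property of the nuclear norm) with \cite[Theorem~6.3]{LZ13} (strong $\Longleftrightarrow$ tilt-stable under the same nondegeneracy/strict complementarity hypothesis). Your additional remarks — the trivial direction ``tilt-stable $\Longrightarrow$ unique'' and the verification that the partial-smoothness and regularity hypotheses of \cite[Theorem~6.3]{LZ13} are met — only make explicit what the paper leaves implicit in the paragraph preceding the statement.
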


\section{Solution uniqueness of convex optimization problems with linear constraints}\label{sec:uniquelin}
\setcounter{equation}{0}

\subsection{General regularizer}
In this section, we study solution uniqueness of the  optimization problem \eqref{P0}
\begin{equation}\label{ConP}
\min_{x\in \XX}\quad g(x)\quad \mbox{subject to}\quad \Phi x=y_0,
\end{equation}
where $g:\XX\to \R$ is a {\em continuous} convex function, $\Phi:\XX\to \YY$ is still a linear operator between two Euclidean spaces, and $y_0$ is a vector in $\YY$. Suppose that $x_0$ is a {\em feasible solution} of problem~\eqref{ConP}, i.e., $\Phi x_0=y_0$. Solution uniqueness of this problem can be characterized  via the so-called  {\em descent cone}  \cite[ Proposition 2.1]{CRPW12} recalled below. 

\begin{Definition}[Descent cone \cite{CRPW12}] The descent cone at $x_0$ is defined by 
\begin{equation}\label{Des}
    \mathcal{D}_g(x_0)\eqdef{\rm cone}\,\{x-x_0|\; g(x)\le g(x_0)\}.
\end{equation}
\end{Definition}

When $g$ is a polyhedral convex function, the set $\{x-x_0|\; g(x)\le g(x_0)\}$ is polyhedral and thus $\mathcal{D}_g(x_0)$ is a closed convex cone.  But it is not closed in general; see, e.g., \cite[Example~4.3]{FNT21} for the case of $\ell_1/\ell_2$ norm. Note that the closure of the descent cone lies in the {\em critical cone} defined by 
\begin{equation}\label{Crit}
    \mathcal{C}(x_0)=\{w\in \mathbb{X}|\;   dg(x_0)(w)\le 0\}
\end{equation}
and the interior of the critical cone is a subset of the descent cone;
see, e.g., \cite[Lemma~3.6]{FNT21}. Hence the differences between them are on the boundary of $\mathcal{C}(x_0)$ denoted by ${\rm bd}\, \mathcal{C}(x_0)$. In the recent paper \cite[Theorem~5.1]{FNT21}, the descent cone of the $\ell_1/\ell_2$ norm is computed exactly via the critical cone and first-order information.  Here, we extend that result to the case of general continuous convex function by involving the radical cone.

\begin{Proposition}[Descent cone via radial cone]\label{DesRd} Suppose that $g:\XX\to \R$ is a continuous convex function. The descent cone $D_g(x_0)$ in \eqref{Des} is computed by
\begin{equation}\label{ComDes}
    D_g(x_0)=\bigcup_{y\in \partial g(x_0)} \left[\mathcal{R}_{\partial g^*(y)}(x_0)\cap {\rm bd}\, \mathcal{C}(x_0)\right]\cup({\rm int}\, \mathcal{C}(x_0)).
\end{equation} 
\end{Proposition}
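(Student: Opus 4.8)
The plan is to prove the two inclusions in \eqref{ComDes} separately, using the known structural facts about the descent cone and the critical cone recalled just before the statement: namely that $\mathrm{int}\,\mathcal{C}(x_0)\subset \mathcal{D}_g(x_0)\subset \mathcal{C}(x_0)$ and $\overline{\mathcal{D}_g(x_0)}\subset \mathcal{C}(x_0)$, so that the only issue is what happens on $\mathrm{bd}\,\mathcal{C}(x_0)$. Since $\mathrm{int}\,\mathcal{C}(x_0)$ is already contained in $\mathcal{D}_g(x_0)$, for the ``$\supset$'' direction it suffices to show that for each $y\in\partial g(x_0)$ one has $\mathcal{R}_{\partial g^*(y)}(x_0)\cap \mathrm{bd}\,\mathcal{C}(x_0)\subset \mathcal{D}_g(x_0)$. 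For the ``$\subset$'' direction it suffices to show that every $w\in\mathcal{D}_g(x_0)\cap\mathrm{bd}\,\mathcal{C}(x_0)$ lies in $\mathcal{R}_{\partial g^*(y)}(x_0)$ for some $y\in\partial g(x_0)$.

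\textbf{Key steps.} First I would recall the directional-derivative formula \eqref{Dd}, $dg(x_0)(w)=\sup_{v\in\partial g(x_0)}\langle v,w\rangle$, valid since $g$ is continuous. A direction $w$ lies in $\mathrm{bd}\,\mathcal{C}(x_0)$ exactly when $dg(x_0)(w)=0$ and this sup is attained; more precisely, $w\in\mathcal{C}(x_0)$ with the supremum attained at $y$ means $\langle y,w\rangle=0=dg(x_0)(w)$, i.e. $y$ is a maximizer of $\langle\cdot,w\rangle$ over $\partial g(x_0)$ — and by standard convex analysis (Rockafellar, support functions / exposed faces) this is equivalent to $w$ belonging to the normal cone of $\partial g(x_0)$ at $y$, which in turn is equivalent to $w\in\mathcal{R}_{N_{\partial g(x_0)}(y)}$-type statements. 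The cleaner route: the set $\partial g^*(y)=(\partial g)^{-1}(y)$ consists of all $x$ with $y\in\partial g(x)$, and $x_0+tw\in\partial g^*(y)$ for small $t>0$ means $y\in\partial g(x_0+tw)$. Using the subgradient inequality at $x_0+tw$ against $x_0$ and against $x_0+tw$ one gets $g(x_0+tw)-g(x_0)=t\langle y,w\rangle$. So: $w\in\mathcal{R}_{\partial g^*(y)}(x_0)$ iff there is $t>0$ with $y\in\partial g(x_0+tw)$, and on that segment $g$ is affine with slope $\langle y,w\rangle$.

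For ``$\supset$'': take $w\in\mathcal{R}_{\partial g^*(y)}(x_0)\cap\mathrm{bd}\,\mathcal{C}(x_0)$. Then for some $t>0$, $g(x_0+tw)-g(x_0)=t\langle y,w\rangle\le t\,dg(x_0)(w)\le 0$ since $w\in\mathcal{C}(x_0)$; hence $x_0+tw\in\{x:g(x)\le g(x_0)\}$, so $w\in\mathcal{D}_g(x_0)$. For ``$\subset$'': take $w\in\mathcal{D}_g(x_0)\cap\mathrm{bd}\,\mathcal{C}(x_0)$, $w\ne 0$. By definition of the descent cone there is $\lambda>0$ and a point $x$ with $g(x)\le g(x_0)$ and $\lambda(x-x_0)=w$; set $x_1=x_0+w/\lambda$, so $g(x_1)\le g(x_0)$. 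On the segment $[x_0,x_1]$ the convex function $t\mapsto g(x_0+tw)$ satisfies $g(x_0)\le g(x_0+tw)\le$ (convex combination) $\le \max\{g(x_0),g(x_1)\}=g(x_0)$ — wait, the lower bound needs $dg(x_0)(w)\ge 0$, which holds because $w\in\mathcal{C}(x_0)$ forces $dg(x_0)(w)\le 0$ and the chord from $0$ to $g(x_1)-g(x_0)\le 0$ combined with convexity of $t\mapsto g(x_0+tw)$ forces the right-derivative at $0$, namely $dg(x_0)(w)$, to be $\le g(x_1)-g(x_0)\le 0$... I need to be careful: convexity gives $g$ constant equal to $g(x_0)$ on $[x_0,x_1]$ precisely when $g(x_1)=g(x_0)$ AND $dg(x_0)(w)=0$. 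Since $w\in\mathrm{bd}\,\mathcal{C}(x_0)$ we have $dg(x_0)(w)=0$; convexity then yields $g(x_0+tw)\ge g(x_0)+t\,dg(x_0)(w)=g(x_0)$ for all $t\ge0$, and combined with $g(x_0+ (1/\lambda) w)=g(x_1)\le g(x_0)$ we get $g(x_0+tw)=g(x_0)$ for $t\in[0,1/\lambda]$. Then pick any $t_0\in(0,1/\lambda)$ and choose $y\in\partial g(x_0+t_0 w)$; by the subgradient inequality at $x_0+t_0w$ tested against $x_0$ and against $x_0+sw$ for $s$ near $t_0$, and using that $g$ is constant along the segment, one shows $\langle y, w\rangle = 0$ and — with a short argument using that $g(x_0+sw)-g(x_0+t_0w)\ge\langle y,(s-t_0)w\rangle$ for all $s$ — that actually $y\in\partial g(x_0+sw)$ for all such $s$, in particular $y\in\partial g(x_0)$. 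Hence $x_0+t_0w\in\partial g^*(y)$, so $w\in\mathcal{R}_{\partial g^*(y)}(x_0)$ for this $y\in\partial g(x_0)$, completing ``$\subset$''.

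\textbf{Main obstacle.} The delicate point is the ``$\subset$'' direction on the boundary: one must extract, from the mere fact that $g$ is \emph{constant} on a segment emanating from $x_0$ in direction $w$, a \emph{single} subgradient $y\in\partial g(x_0)$ that is simultaneously a subgradient at an interior point $x_0+t_0w$ of that segment — i.e. the fact that along a segment on which a convex function is affine (here constant), the subdifferentials at interior points are contained in the subdifferential at the endpoint and all contain a common element exposing $w$. This is a standard but slightly fiddly monotonicity/convexity argument; I expect it to be the step requiring the most care, together with handling the edge case where $\mathrm{bd}\,\mathcal{C}(x_0)$ or $\partial g(x_0)$ degenerates (e.g. $\partial g(x_0)$ a singleton, or $w=0$), which is immediate but should be mentioned.
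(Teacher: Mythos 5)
Your proposal is correct and follows essentially the same route as the paper's proof: both reduce the question to the sign of $dg(x_0)(w)$ and, on the boundary of the critical cone, to the fact that $g$ is constant on the segment from $x_0$ to $x_0+\tau w$, which is equivalent to membership of that segment in $\partial g^*(y)$ for a suitable $y\in\partial g(x_0)$. The only difference is in the ``$\subset$'' direction, where the paper selects $y\in\partial g(x_0)$ as a maximizer of $\la \cdot,w\ra$ over the compact set $\partial g(x_0)$ and pushes it forward to $x_0+\tau w$ via the Fenchel equality, whereas you select $y\in\partial g(x_0+t_0w)$ at an interior point of the segment and pull it back to $x_0$ via the subgradient inequality together with $\la y,w\ra=0$ --- a symmetric and equally valid variant.
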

\begin{proof} Let us start to prove the inclusion ``$\subset $'' in \eqref{ComDes}. Pick any $w\in D_g(x_0)$, there exists some $\tau>0$ such that $g(x_0+\tau w)\le g(x_0)$. As the function $g$ is convex and continuous, we derive from \eqref{Dd} that 
\begin{equation}\label{max}
0\ge g(x_0+\tau w)-g(x_0)\ge \tau\max_{y\in \partial g(x_0)}\la y,w\ra=\tau dg(x_0)(w),
\end{equation}
which implies that $dg(x_0)(w)\le0$.
Note further that $dg(x_0)(\cdot)$ is also a proper convex and continuous function. It follows that 
\[
{\rm int}\, \mathcal{C}(x_0)=\{u\in \XX|\; dg(x_0)(u)<0\}.
\]
If $dg(x_0)(w)<0$, we have $w\in {\rm int}\, \mathcal{C}(x_0)$. If $dg(x_0)(w)=0$, i.e., $w\in {\rm bd}\, \mathcal{C}(x_0)$ (the boundary of $\mathcal{C}(x_0)$), note from \eqref{max}  that $g(x_0+\tau w)=g(x_0)$ and there exists some $y\in \partial g(x_0)$ such that $\la y,w\ra=0$. Hence we have 
\[
g(x_0+\tau w)=g(x_0)=\la y,x_0\ra-g^*(y)=\la y,x_0+\tau w\ra-g^*(y),
\]
which implies that $y\in \partial g(x_0+\tau w)$, i.e., $x_0+\tau w\in \partial g^*(y)$. It follows that $w\in \mathcal{R}_{\partial g^*(y)}(x_0)$. The inclusion ``$\subset$'' in \eqref{ComDes} is verified. 

To justify the converse inclusion ``$\supset $'' in \eqref{ComDes}, we pick any $w$ in the right-hand side. If $w\in {\rm int}\, \mathcal{C}(x_0)$, i.e., $dg(x_0)(w)<0$,  there exists some $\tau>0$ sufficiently small so that $$g(x_0+\tau w)-g(x_0)\le 0,$$ which means $w\in \mathcal{D}_g(x_0)$. If $w\in {\rm bd}\, \mathcal{C}(x_0)$, i.e., $dg(x_0)(w)=0$, there exists some $y\in \partial g(x_0)$ such that $w\in \mathcal{R}_{\partial g^*(y)}(x_0)$. We find some $t>0$ such that $x_0+tw\in \partial g^*(y)$. It follows that   
\[
g(x_0+tw)=\la y, x_0+tw\ra-g^*(y)=g(x_0)+t\la y,w\ra\le g(x_0)+t dg(x_0)(w)=g(x_0),
\]
which also implies that $w\in  \mathcal{D}_g(\ox)$. This verifies the inclusion ``$\supset$'' in \eqref{ComDes}.
\end{proof}

The following result taken from \cite[ Proposition 2.1]{CRPW12} is the well-known characterization of solution uniqueness via the descent cone. 

\begin{Proposition}[Descent cone for solution uniqueness \cite{CRPW12}]\label{DC} A feasible point $x_0$ of problem~\eqref{ConP} is the unique solution if and only if $$\Ker \Phi \cap \mathcal{D}_g(x_0)=\{0\}.$$
\end{Proposition}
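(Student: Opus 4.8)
The plan is to prove the two implications of Proposition~\ref{DC} directly from the definition of the descent cone, exploiting convexity of $g$ and the feasibility constraint $\Phi x = y_0$. The key observation is that the feasible set of problem~\eqref{ConP} is the affine subspace $x_0 + \Ker\Phi$, so comparing a candidate competitor $x$ with $x_0$ amounts to choosing a direction $w\in\Ker\Phi$.

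First I would prove the contrapositive of the ``only if'' direction. Suppose $\Ker\Phi\cap\mathcal{D}_g(x_0)\ne\{0\}$, and pick $w\ne 0$ in this intersection. By the definition \eqref{Des} of the descent cone, there is $\tau>0$ with $g(x_0+\tau w)\le g(x_0)$; set $x\eqdef x_0+\tau w$, so $x\ne x_0$. Since $w\in\Ker\Phi$, we have $\Phi x = \Phi x_0 = y_0$, so $x$ is feasible. If $x_0$ were a solution, then $g(x_0)\le g(x)\le g(x_0)$, forcing $g(x)=g(x_0)$; hence $x$ is another optimal solution, contradicting uniqueness. Thus $x_0$ is not the unique solution, which establishes the ``only if'' direction by contraposition. (One could also reason with any $\alpha\in(0,\tau]$ by convexity to produce a whole segment of minimizers, but one competitor suffices.)

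Next I would prove the ``if'' direction, again by contraposition. Suppose $x_0$ is feasible but not the unique solution. Either $x_0$ is not optimal at all, or there is another optimal solution $x_1\ne x_0$ with $\Phi x_1 = y_0$ and $g(x_1)\le g(x_0)$. In the second case, set $w\eqdef x_1-x_0\ne 0$; then $w\in\Ker\Phi$ since $\Phi x_1 = \Phi x_0$, and $w\in\mathcal{D}_g(x_0)$ by definition since $g(x_0+1\cdot w)=g(x_1)\le g(x_0)$. Hence $\Ker\Phi\cap\mathcal{D}_g(x_0)\ne\{0\}$. In the first case, where $x_0$ is feasible but not optimal, there is a feasible $x_1$ with $g(x_1)<g(x_0)$; the same $w = x_1-x_0$ lies in $\Ker\Phi\cap\mathcal{D}_g(x_0)$ and is nonzero. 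Either way the intersection is nontrivial, which is the contrapositive of ``if''. Combining the two directions gives the equivalence.

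I do not expect a serious obstacle here — the statement is essentially a reformulation of the definitions once one recognizes that feasibility $\equiv$ moving within $\Ker\Phi$ and that $\mathcal{D}_g(x_0)$ is a cone (so scaling the displacement is harmless). The only point requiring a little care is the ``not optimal'' sub-case in the ``if'' direction: strictly speaking Proposition~\ref{DC} presumes only that $x_0$ is feasible, so ``not the unique solution'' must be read as ``$x_0$ is not \emph{the} optimal solution,'' which includes the possibility that $x_0$ is suboptimal; handling that case explicitly (as above) keeps the argument airtight. Since this proof is the verbatim statement from \cite[Proposition~2.1]{CRPW12}, one may alternatively simply cite it, but the self-contained argument above is short enough to include.
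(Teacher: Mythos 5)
Your proof is correct. The paper itself offers no proof of Proposition~\ref{DC} --- it is quoted verbatim from \cite[Proposition~2.1]{CRPW12} --- so there is nothing to diverge from; your two-implication argument from the definitions (feasibility $\Leftrightarrow$ displacement in $\Ker\Phi$, plus the fact that any nonzero element of $\mathcal{D}_g(x_0)$ is a positive multiple of some $x-x_0$ with $g(x)\le g(x_0)$, which uses convexity of the sublevel set) is the standard one and is airtight, including your careful handling of the sub-case where $x_0$ is feasible but not optimal.
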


The exact computation of the descent cone \eqref{ComDes} gives the impression that it is a big and complicated set. But its intersection with $\Ker \Phi$ seems to be small and simple as in the above result. We enhance this observation by providing next the calculation of this intersection  when $x_0$ is an optimal solution, which means  $0\in \partial g(x_0)+\Im \Phi^*$, i.e.,  
\begin{equation}\label{SC}
\Delta(x_0)\eqdef\partial g(x_0)\cap \Im \Phi^*\neq \emptyset.
\end{equation}
This is known as the Source Condition \cite{GHS11}. An element of $\Delta(x_0)$ is called a {\em dual certificate}.

\begin{Proposition}\label{Int} Let $x_0$ be an optimal solution of problem~\eqref{ConP}. Then we have
\begin{equation}\label{ComDes2}
\Ker \Phi\cap \mathcal{D}_g(x_0)=\Ker \Phi\cap \mathcal{R}_{\partial g^*(y)}(x_0)\quad \mbox{for any}\quad y\in \Delta(x_0). 
\end{equation}
\end{Proposition}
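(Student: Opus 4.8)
The key is that $\Delta(x_0)\neq\emptyset$ supplies a dual certificate $y\in\partial g(x_0)\cap\Im\Phi^*$, which we will use to upgrade the general descent-cone formula from Proposition~\ref{DesRd} once we intersect with $\Ker\Phi$. The plan is to prove the two inclusions in \eqref{ComDes2} separately, using \eqref{ComDes} for the harder one.

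\emph{Inclusion ``$\supset$''.} This direction needs no dual certificate and is essentially immediate. Take $w\in\Ker\Phi\cap\mathcal{R}_{\partial g^*(y)}(x_0)$ for the fixed $y\in\Delta(x_0)$. Then there is $t>0$ with $x_0+tw\in\partial g^*(y)$, so by the Fenchel equality $g(x_0+tw)=\la y,x_0+tw\ra-g^*(y)=g(x_0)+t\la y,w\ra$. Since $y\in\Im\Phi^*$, write $y=\Phi^*z$; then $\la y,w\ra=\la z,\Phi w\ra=0$ because $w\in\Ker\Phi$. Hence $g(x_0+tw)=g(x_0)$, so $w\in\mathcal{D}_g(x_0)$, and of course $w\in\Ker\Phi$. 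This shows $\Ker\Phi\cap\mathcal{R}_{\partial g^*(y)}(x_0)\subseteq\Ker\Phi\cap\mathcal{D}_g(x_0)$.

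\emph{Inclusion ``$\subset$''.} Take $w\in\Ker\Phi\cap\mathcal{D}_g(x_0)$. By \eqref{ComDes} there are two cases. If $w\in{\rm int}\,\mathcal{C}(x_0)$, i.e.\ $dg(x_0)(w)<0$, I want to contradict optimality of $x_0$: since $y=\Phi^*z\in\partial g(x_0)$ and $w\in\Ker\Phi$, we have $dg(x_0)(w)=\sup_{v\in\partial g(x_0)}\la v,w\ra\ge\la y,w\ra=\la z,\Phi w\ra=0$, contradicting $dg(x_0)(w)<0$. So this case is vacuous for $w\in\Ker\Phi$ when $x_0$ is optimal. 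The remaining case is $w\in\mathcal{R}_{\partial g^*(y')}(x_0)\cap{\rm bd}\,\mathcal{C}(x_0)$ for \emph{some} $y'\in\partial g(x_0)$ (not necessarily the fixed certificate). Here the task is to transfer from $y'$ to the fixed dual certificate $y$. We have $x_0+tw\in\partial g^*(y')$ for some $t>0$, hence $g(x_0+tw)=g(x_0)+t\la y',w\ra=g(x_0)$ since $dg(x_0)(w)=0$ forces $\la y',w\ra\le0$ while $y'\in\partial g(x_0)$ gives $\la y',w\ra\ge -$ (actually $\la y',w\ra= dg(x_0)(w)$ is not automatic; but $\la y',w\ra\le dg(x_0)(w)=0$ and the subgradient inequality together with $g(x_0+tw)=g(x_0)$ pins $\la y',w\ra=0$). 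Thus $g(x_0+tw)=g(x_0)$. Now use the fixed certificate: $g$ is convex and $g(x_0+tw)=g(x_0)$, so $x_0+tw$ is also a minimizer of $g(\cdot)$ over the affine set $\Phi^{-1}(y_0)$... more directly, since $y=\Phi^*z\in\partial g(x_0)$ and $\la y,w\ra=\la z,\Phi w\ra=0$ (as $w\in\Ker\Phi$), the subgradient inequality gives $g(x_0+tw)\ge g(x_0)+\la y,tw\ra=g(x_0)$; combined with $g(x_0+tw)=g(x_0)$ we get equality, i.e.\ $g(x_0+tw)=g(x_0)+\la y,x_0+tw\ra-\la y,x_0\ra=\la y,x_0+tw\ra-g^*(y)$ using $g(x_0)=\la y,x_0\ra-g^*(y)$ (Fenchel equality, valid since $y\in\partial g(x_0)$). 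Hence $y\in\partial g(x_0+tw)$, i.e.\ $x_0+tw\in\partial g^*(y)$, so $w\in\mathcal{R}_{\partial g^*(y)}(x_0)$, and $w\in\Ker\Phi$ by assumption. This completes ``$\subset$''.

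\emph{Main obstacle.} The only delicate point is the transfer step in the ``$\subset$'' direction: the descent-cone formula \eqref{ComDes} only guarantees membership in $\mathcal{R}_{\partial g^*(y')}(x_0)$ for \emph{some} active subgradient $y'$, whereas \eqref{ComDes2} asserts the radial cone of the \emph{fixed} certificate $y$ works. The resolution is that intersecting with $\Ker\Phi$ together with $y\in\Im\Phi^*$ forces $\la y,w\ra=0$; this, plus $g(x_0+tw)=g(x_0)$ (which follows once we know $w\in\mathcal{D}_g(x_0)$ is on $\mathrm{bd}\,\mathcal{C}(x_0)$), is exactly what makes the Fenchel equality $g(x_0+tw)=\la y,x_0+tw\ra-g^*(y)$ hold, hence $x_0+tw\in\partial g^*(y)$. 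Continuity of $g$ (so that \eqref{Dd} applies and $\mathcal{C}(x_0)$ has the stated interior/boundary description) is used throughout but is part of the standing hypothesis.
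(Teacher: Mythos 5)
Your proof is correct, and your ``$\supset$'' inclusion is exactly the paper's argument (radial-cone membership plus the Fenchel equality plus $\la y,w\ra=0$ from $y\in\Im\Phi^*$, $w\in\Ker\Phi$). For ``$\subset$'', however, you take a detour the paper avoids: you route through the decomposition \eqref{ComDes} of Proposition~\ref{DesRd} and then labor to transfer membership from the radial cone of some active subgradient $y'$ to that of the fixed certificate $y$. The paper argues directly from the definition of the descent cone: $w\in\mathcal{D}_g(x_0)$ gives $\tau>0$ with $g(x_0+\tau w)\le g(x_0)$; since $w\in\Ker\Phi$ the point $x_0+\tau w$ is feasible, so optimality of $x_0$ forces $g(x_0+\tau w)=g(x_0)$; then $\la y,w\ra=0$ and the Fenchel equality yield $x_0+\tau w\in\partial g^*(y)$ --- precisely the mechanism you deploy at the end of your transfer step, but reached in one line with no case analysis. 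Your route is logically sound: the ${\rm int}\,\mathcal{C}(x_0)$ case is indeed vacuous on $\Ker\Phi$ (since $dg(x_0)(w)\ge\la y,w\ra=0$), and your subgradient inequality $g(x_0+tw)\ge g(x_0)+t\la y,w\ra=g(x_0)$ cleanly closes the gap (in fact without explicitly invoking optimality beyond the existence of the certificate). But the appeal to Proposition~\ref{DesRd} is unnecessary --- this proposition does not depend on it --- and the parenthetical attempting to pin down $\la y',w\ra$ is tangled and dispensable, since once you have $g(x_0+tw)\le g(x_0)$ from $\la y',w\ra\le dg(x_0)(w)=0$, the fixed certificate does all the remaining work.
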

\begin{proof}
As $x_0$ is an optimal solution of problem~\eqref{ConP}, $\Delta(x_0)\neq \emptyset$. Pick any dual certificate $y\in \Delta(x_0)$. To justify the ``$\subset$" inclusion in \eqref{ComDes2}, take any $w\in \Ker \Phi\cap \mathcal{D}_g(x_0)$, we find some $\tau>0$ such that $g(x_0+\tau w)\le g(x_0)$. Since $\Phi(x_0+\tau w)=\Phi x_0=y_0$, we have $g(x_0+\tau w)=g(x_0)$. Note also that $\la y,w\ra=0$ as $y\in \Im \Phi^*$ and $w\in \Ker \Phi$. It follows that 
\[
g(x_0+\tau w)=g(x_0)=\la y,x_0\ra-g^*(y)=\la y,x_0+\tau w\ra-g^*(y),
\]
which implies that $x_0+\tau w\in \partial g^*(y)$, i.e., $w\in \mathcal{R}_{\partial g^*(y)}(x_0)$. This verifies inclusion ``$\subset$'' in \eqref{ComDes2}.

Conversely, pick any $w\in \Ker \Phi\cap \mathcal{R}_{\partial g^*(y)}(x_0)$. We find $t>0$ such that $x_0+tw\in \partial g^*(y)$. It follows that 
\[
g(x_0+tw)=\la y,x_0+tw\ra-g^*(y)=\la y,x_0\ra-g^*(y)=g(x_0), 
\]
which implies  $w\in \mathcal{D}_g(x_0)$ and verifies the converse inclusion ``$\supset$'' in \eqref{ComDes2}. 
\end{proof}

Combining this result with Proposition~\ref{DC}, we obtain a characterization of solution uniqueness for problem~\eqref{ConP} via the radical cone, which is similar to Theorem~\ref{UniRa} in Section~3.

\begin{Theorem}[Characterizations for solution uniqueness via radial cones]\label{UniRd2} The following are equivalent: 
\begin{itemize}
    \item[{\rm(i)}] $x_0$ is a unique optimal solution of  problem \eqref{ConP}.
    \item[{\rm(ii)}] Source Condition~\eqref{SC} holds and  for any dual certificate  $y\in \Delta(x_0)$, one has  
    \begin{equation}\label{P2}
        \Ker \Phi\cap \mathcal{R}_{\partial g^*(y)}(x_0)=\{0\}.
    \end{equation}
    \item[{\rm(iii)}] Source Condition~\eqref{SC} holds and there exists a dual certificate  $\oy\in \Delta(x_0)$ such that 
    \begin{equation}\label{P1}
        \Ker \Phi\cap \mathcal{R}_{\partial g^*(\oy)}(x_0)=\{0\}.
    \end{equation}
\end{itemize}
\end{Theorem}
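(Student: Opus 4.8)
The plan is to obtain all three equivalences by chaining Proposition~\ref{DC} and Proposition~\ref{Int}, after one preliminary observation: for the feasible point $x_0$, being an optimal solution of~\eqref{ConP} is exactly the Source Condition~\eqref{SC}. First I would record this. Since $g$ is continuous on all of $\XX$, the convex program~\eqref{ConP} has no constraint qualification issue, and the sum rule for subdifferentials gives that $x_0$ is optimal if and only if $0\in\partial g(x_0)+(\Ker\Phi)^\perp=\partial g(x_0)+\Im\Phi^*$. Because $\Im\Phi^*$ is a linear subspace, this is the same as saying there is some $y\in\partial g(x_0)$ with $y\in\Im\Phi^*$, i.e. $\Delta(x_0)=\partial g(x_0)\cap\Im\Phi^*\neq\emptyset$. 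Hence $x_0$ is an optimal solution of~\eqref{ConP} if and only if~\eqref{SC} holds, and in that case Proposition~\ref{Int} applies.

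Next I would prove (i)$\Rightarrow$(ii). If $x_0$ is the unique optimal solution, then in particular it is optimal, so Source Condition~\eqref{SC} holds by the preliminary observation. Fix any dual certificate $y\in\Delta(x_0)$. By Proposition~\ref{DC}, uniqueness of $x_0$ gives $\Ker\Phi\cap\mathcal{D}_g(x_0)=\{0\}$, and by Proposition~\ref{Int} we have $\Ker\Phi\cap\mathcal{R}_{\partial g^*(y)}(x_0)=\Ker\Phi\cap\mathcal{D}_g(x_0)=\{0\}$, which is~\eqref{P2}. The implication (ii)$\Rightarrow$(iii) is immediate: Source Condition holds in both, it guarantees $\Delta(x_0)\neq\emptyset$, so we may pick any $\oy\in\Delta(x_0)$ and then~\eqref{P1} is the instance of~\eqref{P2} at $y=\oy$. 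Finally, for (iii)$\Rightarrow$(i), Source Condition~\eqref{SC} together with the preliminary observation shows $x_0$ is optimal, so Proposition~\ref{Int} applies at the given $\oy$, yielding $\Ker\Phi\cap\mathcal{D}_g(x_0)=\Ker\Phi\cap\mathcal{R}_{\partial g^*(\oy)}(x_0)=\{0\}$ by~\eqref{P1}; Proposition~\ref{DC} then forces $x_0$ to be the unique solution.

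There is essentially no hard step here once Proposition~\ref{Int} is in hand: the entire theorem is a repackaging of the descent-cone criterion via the identity in Proposition~\ref{Int}. The only point that requires a line of justification rather than a citation is the equivalence ``$x_0$ optimal $\iff$ Source Condition,'' which rests on continuity of $g$ (so that $\dom g=\XX$ and the subdifferential sum rule holds unconditionally) and on $\Im\Phi^*$ being a subspace; I would state this explicitly rather than leave it implicit, since Proposition~\ref{Int} is invoked with $x_0$ assumed optimal.
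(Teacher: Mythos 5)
Your proof is correct and follows essentially the same route as the paper: both arguments reduce the theorem to Proposition~\ref{DC} combined with the identity of Proposition~\ref{Int}, with (ii)$\Rightarrow$(iii) trivial. Your only addition is the explicit justification that optimality of $x_0$ is equivalent to the Source Condition (via the sum rule and the fact that $\Im\Phi^*$ is a subspace), which the paper asserts without proof just before Proposition~\ref{Int}; spelling it out is a reasonable but minor elaboration, not a different approach.
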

\noindent{\bf Proof.} Let us start by proving the implication [(i)$\Longrightarrow$(ii)] by supposing that $x_0$ is a unique optimal solution of problem~\eqref{ConP}. Hence Source Condition~\eqref{SC} holds. Moreover, \eqref{P2} comes from Proposition~\ref{DC} and Proposition~\ref{Int}. This verifies (ii).

The implication [(ii)$\Longrightarrow$(iii)] is trivial. Let us justify [(iii)$\Longrightarrow$(i)] by supposing that there exists some $\oy\in \Delta(x_0)\neq \emptyset$ such that  condition \eqref{P1} holds. As Source Condition holds, $x_0$ is an optimal solution of problem~\eqref{ConP}. By Proposition~\ref{DC} and Proposition~\ref{Int} again, $x_0$ is the unique solution.  \endproof

\begin{Remark}\label{LPB}{\rm The equivalence between (i) and (iii) in Theorem~\ref{UniRd2} was established recently in \cite[Lemma~3.1]{LPB21} when $g$ is the nuclear norm; see our Corollary~\ref{UniRaNu} for further details about this case. This equivalence is also obtained in \cite[Theorem~3.1]{HKS23} when $g$ is a gauge function.  The appearance of (ii) in the above result is crucial. It tells us that when condition \eqref{P1} holds at some $\oy\in \Delta(x_0)$, it is also satisfied at any other dual certificate. Hence, the choice of $\oy$ in condition \eqref{P1} is not important. If we know one dual certificate of problem~\eqref{ConP}, we only need to check condition~\eqref{P1} at this dual certificate for solution uniqueness. Such a dual certificate can be found via solving a ``gauge'' convex optimization problem; see 
\cite[Section~4]{FNT21} for more details. It is possible to prove Theorem~\ref{UniRd2} directly without using Proposition~\ref{DC}, but our approach here highlights the connection between the descent cone and the radial cone for solution uniquessness. \hfill$\triangle$

}
\end{Remark}

\begin{Remark}[Strong Minima vs Solution Uniqueness]\label{StroUniq} {\rm When the function $g$ satisfies the quadratic growth condition at $x_0$ for any $y\in \partial g(x_0)$ and it is {\em second-order regular} at $x_0$ in the sense of \cite[Definition~3.85]{BS00}, it is shown recently in \cite[Theorem~3.5]{FNP23} that $x_0$ is a strong solution if and only if 
\begin{equation}\label{Stro}
    \left[\bigcap_{y\in \Delta(x_0)} \mathcal{T}_{\partial g^*(y)}(x_0)\right]\cap \Ker \Phi \cap \mathcal{C}(x_0)=\{0\}.
\end{equation}
If $x_0$ is a strong solution of problem~\eqref{ConP}, it is certainly a unique solution. Conversely, if $x_0$ is a unique solution of problem~\eqref{ConP} and the radial cone $\mathcal{R}_{\partial g^*(y)}(x_0)$ is closed at some dual certificate $y\in \Delta(x_0)$,  Theorem~\ref{UniRd2} tells us that 
\[
0= \Ker \Phi \cap \mathcal{R}_{\partial g^*(y)}(x_0)=\Ker \Phi \cap \mathcal{T}_{\partial g^*(y)}(x_0),
\]
which clearly verifies \eqref{Stro}. Thus $x_0$ is a strong solution of problem~\eqref{ConP} if and only if it is the unique solution provided that  the function $g$ satisfies the quadratic growth condition at $x_0$ for any $y\in \partial g(x_0)$ and the second-order regularity at $x_0$, and that the radial cone $\mathcal{R}_{\partial g^*(y)}(x_0)$ is closed at some dual certificate $y\in \Delta(x_0)$. 

The $\ell_1/\ell_2$ norm satisfies all the three latter conditions; see also Remark~\ref{Cl12}. Hence, $x_0$ is a strong solution of problem ~\eqref{ConP} if and only if it is the unique solution in this case. This fact was  established recently in \cite[Thereom~5.1]{FNT21} by a different approach. 

It is also known that the nuclear norm $g(\cdot)=\|\cdot\|_*$ satisfies the quadratic growth condition at $X_0\in \R^{n_1\times n_2}$ for any $Y\in \partial \|X_0\|_*$ and  second-order regularity at $X_0$; see \cite{CDZ17,ZS17}, but the radical cone  $\mathcal{R}_{\partial g^*(Y)}(X_0)$ is not always closed. As discussed in Remark~\ref{ClNu}, this radial cone 
is closed if and only if 
$Y\in {\rm ri}\, \partial \|X_0\|_*$. 
The existence of such dual certificate $Y$  means 
\begin{equation}\label{NSC}
    \Im \Phi^*\cap {\rm ri}\, \partial \|X_0\|_*\neq \emptyset,
\end{equation}
which is known as the {\em Nondegeneracy Source Condition} introduced in \cite{CR09,CR13} for nuclear norm minimization problems. We will add some details about this case in Corollary~\ref{StroUniNu}. \hfill$\triangle$
}
\end{Remark}



\subsection{Nuclear norm case}
Next let us consider the nuclear norm optimization problem:
\begin{equation}\label{NNM}
\min_{X\in \R^{n_1\times n_2}}\quad \|X\|_*\quad \mbox{subject to}\quad \Phi X=M_0,    
\end{equation}
where $\Phi:\R^{n_1\times n_2}\to \R^m$ is a linear operator ($n_2\ge n_1$) and $M_0\in \R^m$ is some known vector. The following characterizations of solution uniqueness for nuclear norm minimization problem~\eqref{NNM} come directly from Theorem~\ref{UniRd2} and the calculation of the radial cone in \eqref{Rd}.

\begin{Corollary}[Solution uniqueness of nuclear norm minimization problem] \label{UniRaNu} Let $X_0$ be an optimal solution of problem~\eqref{NNM}. The following are equivalent 

\begin{itemize}
    \item[{\rm(i)}] $X_0$ is a unique solution of problem~\eqref{NNM}.
    \item[{\rm(ii)}] For any dual certificate $Y\in \Delta(X_0)$, the system 
\begin{equation}\label{Ker}
\Phi\left(U\begin{pmatrix}A&BC&0\\C^TB^T&C&0\\0&0&0\end{pmatrix}V^T\right)=0 \quad \mbox{with}\quad A\in \mathbb{S}^r,B\in \R^{r\times (p-r)}, C\in \mathbb{S}_+^{p-r}
\end{equation}
 gives $A=0$ and $C=0$, where the pair $(U,V)\in \mathcal{O}(X_0)\cap\mathcal{O}(Y)$ comes from the simultaneous ordered singular value decomposition of $X_0$ and $Y$, $r={\rm rank}\, (X_0)$, and $p=p(Y)$ from \eqref{p}.  
\item[{\rm(iii)}] Condition~{\rm (ii)} holds at some dual certificate $\OY\in \Delta(X_0)$. 
 \end{itemize}
\end{Corollary}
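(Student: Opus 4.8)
The plan is to derive Corollary~\ref{UniRaNu} as an immediate consequence of Theorem~\ref{UniRd2} applied to the special case $g=\|\cdot\|_*$, combined with the exact formula~\eqref{Rd} for the radial cone obtained in Lemma~\ref{RC}. First I would note that since $g(X)=\|X\|_*$ is a continuous (indeed finite-valued) convex function on $\R^{n_1\times n_2}$, Theorem~\ref{UniRd2} applies verbatim: statement (i) here (uniqueness of $X_0$) is equivalent to the Source Condition~\eqref{SC} holding together with condition~\eqref{P2} at every dual certificate, and also equivalent to~\eqref{P1} holding at some dual certificate. Since $X_0$ is assumed to be an optimal solution, the Source Condition is automatically satisfied, so it drops out of the statement, and what remains is precisely the equivalence of (i), the ``for all $Y\in\Delta(X_0)$'' version, and the ``there exists $\OY\in\Delta(X_0)$'' version of the kernel condition $\Ker\Phi\cap\mathcal{R}_{\partial g^*(Y)}(X_0)=\{0\}$.

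Next I would translate the abstract kernel condition into the concrete system~\eqref{Ker}. Fix a dual certificate $Y\in\Delta(X_0)=\partial\|X_0\|_*\cap\Im\Phi^*$. By the discussion following Lemma~\ref{Lem} (the construction in~\eqref{OY}--\eqref{SSVD}), $X_0$ and $Y$ admit a simultaneous ordered SVD with a common orthogonal pair $(U,V)\in\mathcal{O}(X_0)\cap\mathcal{O}(Y)$; here $r=\rank(X_0)$ and $p=p(Y)$ as in~\eqref{p}, and necessarily $p\ge r$ since the first $r$ singular values of $Y$ equal $1$. Lemma~\ref{RC} then gives the explicit description of $\mathcal{R}_{\partial g^*(Y)}(X_0)$ as the set of matrices $U\begin{pmatrix}A&BC&0\\C^TB^T&C&0\\0&0&0\end{pmatrix}V^T$ with $A\in\mathbb{S}^r$, $B\in\R^{r\times(p-r)}$, $C\in\mathbb{S}_+^{p-r}$. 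Intersecting with $\Ker\Phi$ amounts to requiring $\Phi$ to annihilate such a matrix; the condition that this intersection is $\{0\}$ is then exactly the implication in~\eqref{Ker}, because the matrix $U(\cdots)V^T$ is zero if and only if $A=0$, $BC=0$, and $C=0$ — and once $C=0$ the block $BC$ is automatically zero for every $B$, so the surviving constraints are precisely $A=0$ and $C=0$. Thus $\Ker\Phi\cap\mathcal{R}_{\partial g^*(Y)}(X_0)=\{0\}$ is equivalent to: every $(A,B,C)$ feasible for~\eqref{Ker} has $A=C=0$. Applying this equivalence inside each of the three clauses of Theorem~\ref{UniRd2} yields (i) $\Leftrightarrow$ (ii) $\Leftrightarrow$ (iii).

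I do not expect any serious obstacle here; the corollary is essentially a substitution of Lemma~\ref{RC} into Theorem~\ref{UniRd2}. The one point requiring a line of care is the bookkeeping in the previous paragraph: one must observe that the defining data $(U,V)$, $r$, $p$ attached to a dual certificate $Y$ are exactly those appearing in~\eqref{Ker}, and that the map $(A,B,C)\mapsto U\begin{pmatrix}A&BC&0\\C^TB^T&C&0\\0&0&0\end{pmatrix}V^T$ is injective on the relevant parameter set modulo the well-understood degeneracy $C=0$ (so that ``the matrix equals $0$'' really does collapse to ``$A=C=0$''). With that noted, the three equivalences transfer mechanically, and the Source Condition hypothesis in Theorem~\ref{UniRd2}(ii)(iii) is subsumed by the standing assumption that $X_0$ is optimal. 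Hence the proof is a short direct citation of Theorem~\ref{UniRd2} and Lemma~\ref{RC}.

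\begin{proof} Since $g=\|\cdot\|_*$ is a continuous convex function on $\R^{n_1\times n_2}$ and $X_0$ is an optimal solution of~\eqref{NNM}, the Source Condition~\eqref{SC} holds, i.e. $\Delta(X_0)=\partial\|X_0\|_*\cap\Im\Phi^*\neq\emptyset$. By Theorem~\ref{UniRd2}, statement (i) is equivalent to $\Ker\Phi\cap\mathcal{R}_{\partial g^*(Y)}(X_0)=\{0\}$ for every $Y\in\Delta(X_0)$, and also to the existence of $\OY\in\Delta(X_0)$ with $\Ker\Phi\cap\mathcal{R}_{\partial g^*(\OY)}(X_0)=\{0\}$.

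Fix any $Y\in\Delta(X_0)\subset\partial\|X_0\|_*$. By the construction in~\eqref{OY}--\eqref{SSVD}, $X_0$ and $Y$ possess a simultaneous ordered SVD with a common orthogonal pair $(U,V)\in\mathcal{O}(X_0)\cap\mathcal{O}(Y)$; set $r=\rank(X_0)$ and $p=p(Y)$ as in~\eqref{p}, so that $p\ge r$. Lemma~\ref{RC} yields
\[
\mathcal{R}_{\partial g^*(Y)}(X_0)=\left\{U\begin{pmatrix}A&BC&0\\C^TB^T&C&0\\0&0&0\end{pmatrix}V^T\;\middle|\; A\in\mathbb{S}^r,\,B\in\R^{r\times(p-r)},\,C\in\mathbb{S}_+^{p-r}\right\}.
\]
Since $U$ and $V$ have orthonormal columns, the matrix $U\begin{pmatrix}A&BC&0\\C^TB^T&C&0\\0&0&0\end{pmatrix}V^T$ equals $0$ if and only if $A=0$, $C=0$, and $BC=0$; once $C=0$ the equality $BC=0$ holds for every $B$. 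Consequently $\Ker\Phi\cap\mathcal{R}_{\partial g^*(Y)}(X_0)=\{0\}$ if and only if every triple $(A,B,C)\in\mathbb{S}^r\times\R^{r\times(p-r)}\times\mathbb{S}_+^{p-r}$ satisfying~\eqref{Ker} has $A=0$ and $C=0$.

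Inserting this equivalence into the ``for all $Y$'' clause of Theorem~\ref{UniRd2} gives (i)$\Leftrightarrow$(ii), and into the ``there exists $\OY$'' clause gives (i)$\Leftrightarrow$(iii). This proves the claimed equivalences.
\end{proof}
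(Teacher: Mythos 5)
Your proposal is correct and follows exactly the route the paper intends: the corollary is stated there as a direct consequence of Theorem~\ref{UniRd2} combined with the radial cone formula of Lemma~\ref{RC}, which is precisely your substitution argument, including the observation that vanishing of the block matrix collapses to $A=C=0$ since $C=0$ forces $BC=0$.
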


\begin{Remark}\label{ComY}{\rm As discussed in Remark~\ref{LPB}, solution uniqueness for nuclear norm minimization problem \eqref{NNM} is characterized via the radical cone in \cite[Lemma~3.1]{LPB21}. The above result advances  \cite[Lemma~3.1]{LPB21} in some ways. The most important thing is that the radical cone is fully calculated, i.e., the general condition~\eqref{P1} is explicitly expressed by condition~\eqref{Ker} at some dual certificate. This condition can be written  and solved as a nonlinear optimization problem; see our previous Remark~\ref{Max}. Moreover, we can pick any dual certificate $Y\in \Delta(X_0)$ and check  \eqref{Ker} at $Y$ for solution uniqueness at $X_0$. A particular dual certificate of problem~\eqref{NNM} can be found by solving a {\em spectral norm minimization problem} \cite[Remark~5.5]{FNT21}. Specifically, suppose that the compact SVD of $X_0$ is
\begin{equation}\label{X0}
 X_0= U_0\Sigma_0V_0^T,
\end{equation}
where $\Sigma_0$ is the diagonal matrix including all the positive singular values of $X_0$ with decreasing order and ${\rm rank}\, (\Sigma_0)=r$, and $U_0\in \R^{n_1\times r}$ and $V_0\in \R^{n_2\times r}$ are orthogonal matrices.
Recall the {\em model tangent space}  \cite{CR13} at $X_0$: 
\begin{equation}\label{T}
    \mathbb{T}\eqdef\{U_0Y^T+XV_0^T|\; X\in \R^{n_1\times r}, Y\in  \R^{n_2\times r}\}.
\end{equation}
Consider the following spectral norm minimization problem
\begin{equation}\label{Spec}
    \min_{Z\in \mathbb{T}^\perp}\quad \|Z\|\quad\mbox{subject to}\quad \mathcal{N}Z=-\mathcal{N} E_0\quad\mbox{with}\quad E_0=U_0V_0^T,
\end{equation}
where $\mathcal{N}$ is a linear operator satisfying $\Ker \mathcal{N}=\Im \Phi^*$. When $X_0$ is an optimal solution of problem~\eqref{NNM},  \cite[Remark~5.5]{FNT21} shows that  the optimal value of problem~\eqref{Spec} denoted by $\rho(X_0)$ is smaller than or equal to $1$. Moreover, if $Z_0$ is an optimal solution of problem~\eqref{Spec}, $Z_0+E_0$ is a dual certificate of problem~\eqref{NNM}; see our Section~\ref{NE} for further details on Numerical Experiment. \hfill$\triangle$

}
\end{Remark}

The following result is similar to Corollary~\ref{StrUnq2}, at which we provide the equivalence between solution uniqueness and strong minima under Nondegenerate Source Condition \eqref{NSC}. It is already discussed in Remark~\ref{StroUniq}. 

\begin{Corollary}\label{StroUniNu} Suppose that $X_0$ is an optimal solution of problem~\eqref{NNM} and Nondegenerate Source Condition \eqref{NSC} is satisfied. Then $X_0$ is a strong solution of problem~\eqref{NNM} if and only if it is the unique solution. 
\end{Corollary}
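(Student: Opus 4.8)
The plan is to reduce this statement to the already-established equivalence between strong minima and solution uniqueness under closedness of the radial cone, exactly in the spirit of Corollary~\ref{StrUnq2}, but now in the constrained setting of problem~\eqref{NNM}. First I would invoke the characterization of strong minima for problem~\eqref{ConP} recorded in Remark~\ref{StroUniq}: since the nuclear norm satisfies the quadratic growth condition and is second-order regular at $X_0$ (by \cite{CDZ17,ZS17}), $X_0$ is a strong solution of~\eqref{NNM} if and only if condition~\eqref{Stro} holds. The key structural input is that, by Remark~\ref{ClNu}, the radial cone $\mathcal{R}_{\partial g^*(Y)}(X_0)$ is closed precisely when $Y\in\mathrm{ri}\,\partial\|X_0\|_*$, i.e. when the nondegeneracy condition~\eqref{NonD} holds at the dual certificate $Y$. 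Nondegenerate Source Condition~\eqref{NSC} is exactly the assertion that such a dual certificate $Y\in\Delta(X_0)$ exists, so under~\eqref{NSC} we have a dual certificate $Y$ at which the radial cone is closed, hence equal to the tangent cone $\mathcal{T}_{\partial g^*(Y)}(X_0)$.

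The easy implication is that a strong solution of a convex problem is automatically the unique solution. For the converse, suppose $X_0$ is the unique solution. By Theorem~\ref{UniRd2}, since the Source Condition holds (it is implied by~\eqref{NSC}), condition~\eqref{P1} holds at \emph{any} dual certificate, in particular at the nondegenerate $Y$ furnished by~\eqref{NSC}:
\[
\Ker\Phi\cap\mathcal{R}_{\partial g^*(Y)}(X_0)=\{0\}.
\]
Because the radial cone at this $Y$ is closed, $\mathcal{R}_{\partial g^*(Y)}(X_0)=\mathcal{T}_{\partial g^*(Y)}(X_0)$, so $\Ker\Phi\cap\mathcal{T}_{\partial g^*(Y)}(X_0)=\{0\}$. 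Intersecting further with $\mathcal{C}(X_0)$ and with the intersection over all dual certificates in~\eqref{Stro} only shrinks this already-trivial set, so~\eqref{Stro} holds, and therefore $X_0$ is a strong solution. This completes the equivalence, and indeed the argument is a verbatim transcription of the reasoning already sketched in Remark~\ref{StroUniq}, now packaged as a corollary.

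I do not anticipate a real obstacle here: all the analytic heavy lifting — the quadratic growth and second-order regularity of the nuclear norm, the characterization of strong minima via~\eqref{Stro}, the radial-cone characterization of uniqueness in Theorem~\ref{UniRd2}, and the closedness criterion ``closed $\iff$ nondegeneracy'' from Remark~\ref{ClNu} — is already available in the excerpt. The only point requiring a moment's care is to make sure the nondegeneracy condition~\eqref{NonD} at the particular dual certificate $Y$ supplied by~\eqref{NSC} is what makes $\mathcal{R}_{\partial g^*(Y)}(X_0)$ closed, and that this is the \emph{same} $Y$ at which Theorem~\ref{UniRd2}(ii) gives the triviality of the intersection with $\Ker\Phi$; this is immediate since Theorem~\ref{UniRd2}(ii) holds at every dual certificate simultaneously. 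Hence the proof is short and essentially a citation-chaining exercise.
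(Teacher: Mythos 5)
Your proposal is correct and follows essentially the same route as the paper: Nondegenerate Source Condition supplies a dual certificate at which the radial cone is closed (Remark~\ref{ClNu}/\ref{StroUniq}), so it coincides with the tangent cone, and the uniqueness criterion of Theorem~\ref{UniRd2} then matches the strong-minima criterion from \cite{FNP23}. The paper's proof is exactly this citation chain, stated slightly more tersely.
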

\begin{proof} As discussed in Remark~\ref{StroUniq}, Nondegenerate Source Condition \eqref{NSC} guarantees  closedness of the radial cone $\mathcal{R}_{\partial g^*(Y)}(X_0)$ for some dual certificate $Y\in \Delta(X_0)$. It follows that 
\[
\mathcal{R}_{\partial g^*(Y)}(X_0)=\mathcal{T}_{\partial g^*(Y)}(X_0). 
\]
Hence we have 
\[
\Ker \Phi \cap \mathcal{R}_{\partial g^*(Y)}(X_0)=\Ker \Phi \cap \mathcal{T}_{\partial g^*(Y)}(X_0). 
\]
By Theorem~\ref{UniRd2} and \cite[Theorem~5.2]{FNP23}, $X_0$ is a strong solution of problem~\eqref{NNM} if and only if it is the unique solution. 
\end{proof}

 
 The following result gives a necessary condition for solution uniqueness. 

\begin{Corollary}\label{SRICo} Let $X_0$ be an optimal solution of problem~\eqref{NNM}. If $X_0$ is a unique solution, then the following {\em Strict Restricted Injectivity} holds
\begin{equation}\label{SRI}
\Ker \Phi\cap U_0\mathbb{S}^rV_0^T=\{0\}. 
\end{equation}
If additionally Nondegeneracy Source Condition~\eqref{NSC} holds, then Strict Restricted Injectivity is also sufficient  for solution uniqueness at $X_0$. 
\end{Corollary}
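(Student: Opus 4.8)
\textbf{Proof proposal for Corollary~\ref{SRICo}.}

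The plan is to specialize the radial cone characterization of Corollary~\ref{UniRaNu} to the two implications separately. For the necessity of Strict Restricted Injectivity, I would take any dual certificate $Y\in\Delta(X_0)$ (nonempty since $X_0$ is optimal), form the simultaneous ordered SVD with orthogonal pair $(U,V)\in\mathcal{O}(X_0)\cap\mathcal{O}(Y)$, and observe that the block $U_0\mathbb{S}^rV_0^T$ is exactly the set of matrices obtained in \eqref{Ker} by taking $A\in\mathbb{S}^r$ arbitrary and $B=0$, $C=0$ (using that the first $r$ columns of $U,V$ can be chosen to coincide with $U_0,V_0$ from the compact SVD \eqref{X0}). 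Hence $U_0\mathbb{S}^rV_0^T\subset \mathcal{R}_{\partial g^*(Y)}(X_0)$, so if $W\in\Ker\Phi\cap U_0\mathbb{S}^rV_0^T$ then $W\in\Ker\Phi\cap\mathcal{R}_{\partial g^*(Y)}(X_0)=\{0\}$ by Corollary~\ref{UniRaNu}(ii). This gives \eqref{SRI}.

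For the converse under Nondegeneracy Source Condition \eqref{NSC}, I would use that \eqref{NSC} forces the existence of a dual certificate $Y\in\Delta(X_0)$ with $Y\in\mathrm{ri}\,\partial\|X_0\|_*$, which by the analysis in Remark~\ref{ClNu} is equivalent to $p(Y)=r$. When $p=r$ the radial cone collapses: formula \eqref{Rd} (with no $B,C$ blocks present) reduces to
\[
\mathcal{R}_{\partial g^*(Y)}(X_0)=U\begin{pmatrix}\mathbb{S}^r&0\\0&0\end{pmatrix}V^T=U_0\mathbb{S}^rV_0^T.
\]
Therefore $\Ker\Phi\cap\mathcal{R}_{\partial g^*(Y)}(X_0)=\Ker\Phi\cap U_0\mathbb{S}^rV_0^T$, and assuming \eqref{SRI} the right-hand side is $\{0\}$; Corollary~\ref{UniRaNu} (equivalence of (i) and (iii)) then yields that $X_0$ is the unique solution.

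The main obstacle is bookkeeping with the SVD bases: one must make sure the orthogonal pair realizing the simultaneous ordered SVD of $X_0$ and $Y$ restricts on its first $r$ columns to the compact SVD factors $U_0,V_0$ of $X_0$, so that the abstract block $U\,\mathrm{diag}(\mathbb{S}^r,0)\,V^T$ is literally $U_0\mathbb{S}^rV_0^T$. This is exactly the construction \eqref{OY2}–\eqref{SSVD} already carried out in the paper, so it should be a matter of citing it rather than reproving it; once the bases are aligned the rest is immediate from the displayed radial-cone formula and Corollary~\ref{UniRaNu}.
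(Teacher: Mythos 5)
Your proposal is correct and follows essentially the same route as the paper: both directions rest on the observation that $U_0\mathbb{S}^rV_0^T\subset\mathcal{R}_{\partial g^*(Y)}(X_0)$ for every dual certificate $Y$ (with equality when $p(Y)=r$ under Nondegeneracy Source Condition), combined with the radial-cone characterization of uniqueness. The SVD bookkeeping you flag is indeed already settled by the construction in \eqref{OY2}--\eqref{SSVD}, so no further argument is needed.
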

\begin{proof}
    Note from the formula of radial cone \eqref{Rd} that 
    \[
    U_0\mathbb{S}^rV_0^T\subset \mathcal{R}_{\partial g^*(Y)}(X_0)\quad \mbox{for any}\quad Y\in \Delta(X_0). 
    \]
If $X_0$ a unique solution of problem~\eqref{NNM}, we get from Theorem~\ref{UniRd2} that  Strict Restricted Injectivity~\eqref{SRI} is satisfied. 

Conversely, if Nondegeneracy Source Condition~\eqref{NSC} holds, there exists $Y\in \Delta(X_0)$ such that $Y\in {\rm ri}\,\partial \|X_0\|$, which means that $p(Y)=r$. By  the formula of radial cone \eqref{Rd} again, we have
\[
\mathcal{R}_{\partial g^*(Y)}(X_0)= U_0\mathbb{S}^rV_0^T.
\]
It follows from  Theorem~\ref{UniRd2} that Strict Restricted Injectivity \eqref{SRI} is also sufficient for solution uniqueness at $X_0$. 
\end{proof}

Strict Restricted Injectivity is introduced recently in \cite[Corollary~5.7]{FNP23} for nuclear norm minimization problem~\eqref{NNM}, as a necessary condition for strong minima at $X_0$.  The above result is quite similar, but slightly stronger as a unique solution of problem ~\eqref{NNM} may be not a strong one. The following example taken from \cite[Example~5.11]{FNP23} shows such difference.

\begin{Example}[Difference between unique solution and strong solution in problem \eqref{NNM}]\label{Last}  { \rm Consider the following optimization problem 
\begin{equation}\label{Impo}
\min_{X\in\R^{2\times 2}} \|X\|_* \quad\text{ subject to }\quad \Phi X\eqdef\begin{pmatrix}
     X_{11}+X_{22}  \\
     X_{12}-X_{21}+X_{22} 
\end{pmatrix}=\begin{pmatrix}
     1  \\
     0
\end{pmatrix}.
\end{equation}
Note that $X_0=\begin{pmatrix}
     1 &0  \\
     0 &0
\end{pmatrix}$ is an optimal solution of problem~\eqref{Impo}. Indeed, it is easy to see that 
\[
\Im \Phi^*={\rm span}\left\{\begin{pmatrix}1&0\\0&1\end{pmatrix},\begin{pmatrix}0&1\\-1&1\end{pmatrix}\right\}\qquad \mbox{and}\qquad \partial \|X_0\|_*=\begin{pmatrix}
     1 &0  \\
     0 &[-1,1]
\end{pmatrix}.
\]
It follows that 
\[
\Delta(X_0)=\Im \Phi^*\cap \partial \|X_0\|_*=\begin{pmatrix}1&0\\0&1\end{pmatrix}\quad \mbox{and}\quad \Im \Phi^*\cap{\rm ri}\, \partial \|X_0\|_*=\emptyset. 
\]
Thus $X_0$ is a solution, but Nondegeneracy Source Condition~\eqref{NSC} is not satisfied. In the view of Remark~\ref{StroUniq}, the latter is well-expected, as we  want $X_0$ to be a unique solution, but not a strong solution in this example.

For $Y_0=\Id_2\in \Delta(X_0)$, we obtain the radial cone from \eqref{Rd} and the tangent cone from \eqref{Tang} respectively 
\begin{eqnarray}\label{RT2}
\mathcal{R}_{\partial g^*(Y_0)}(X_0)=\left\{\begin{pmatrix}a&bc\\bc&c\end{pmatrix}|\;a,b\in \R, c\ge 0\right\}\mbox{and}\;  \mathcal{T}_{\partial g^*(Y_0)}(X_0)=\left\{\begin{pmatrix}a&b\\b&c\end{pmatrix}|\;a,b\in \R, c\ge 0\right\}.
\end{eqnarray}

To check condition~\eqref{Ker} or \eqref{P2}, due to~\eqref{RT2} we solve the system $\Phi\begin{pmatrix}a&bc\\bc&c\end{pmatrix}=0$ with $c\ge 0$, which means
\[
a+c=0\quad\mbox{and}\quad bc-bc+c=0. 
\]
This clearly gives us $a=c=0$, i.e., condition~\eqref{Ker} is satisfied. Thus, $X_0$ is a unique solution of problem~\eqref{Impo}.

To check whether $X_0$ is a strong solution, we have to verify the condition
\[
\Ker \Phi\cap \mathcal{T}_{\partial g^*(Y_0)}(X_0)=\{0\}
\]
according to \cite[Theorem~5.2]{FNP23}. By \eqref{RT2}, we solve the system $\Phi\begin{pmatrix}a&b\\b&c\end{pmatrix}=0$ with $c\ge 0$:
\[
a+c=0\quad\mbox{and}\quad b-b+c=0\quad \mbox{for}\quad a,b\in \R, c\ge 0, 
\]
which gives us $a=c=0$ and $b\in \R$. Hence the above condition for strong minima at $X_0$ fails, i.e., $X_0$ is not a strong solution of problem~\eqref{Impo}.\hfill$\triangle$
}
\end{Example}

\begin{Remark}[Comparisons with \cite{HS23}] {\rm  The main advantages of using the descent cone in Proposition~\ref{DC} over the radial cone in Theorem~\ref{UniRd2} for solution uniqueness  are that we do not need to know if $X_0$ is an optimal solution of problem~\ref{NNM} initially and do not need to find a dual certificate. But the descent cone seems to be bigger and more complicate than the radial cone due to\eqref{ComDes}. In the recent paper \cite{HP23}, Hoheisel and Paquette introduced the following set 
\begin{equation}\label{W}
W(X_0)=\left\{UM\begin{pmatrix}\Id_r &0\\0&R\end{pmatrix} V^T\left|\;\begin{array}{ll} &M\in \mathbb{S}^{n_1}_+-\mathcal{F}^{n_1,r},\, {\rm Tr}\, (M)=0, \\
&R\in \mathcal{V}_{n_1-r,n_2-r},\,  M\begin{pmatrix}\Id_r &0\\0&RR^T\end{pmatrix}=M\end{array}\right.\right\},
\end{equation}
where $(U,V)\in \mathcal{O}(X_0)$, the set $\mathcal{F}^{n_1,r}$ is defined by
\[
\mathcal{F}^{n_1,r}\eqdef\left\{\begin{pmatrix}A&0\\0&0\end{pmatrix}\in \mathbb{S}^{n_1}_+|\; A\in \mathbb{S}^r_{++}\right\},
\]
and $\mathcal{V}_{n_1-r,n_2-r}$ is the {\em Stiefel manifold}:
\[
\mathcal{V}_{n_1-r,n_2-r}\eqdef\{R\in \R^{(n_1-r)\times (n_2-r)}|\; R^TR=\Id\}.
\]
In particular \cite[Corollary~4.1]{HP23} shows that $X_0$ is a unique solution of problem~\eqref{NNM} if and only if $X_0$ is an optimal solution and 
\begin{equation}\label{HP1}
    \Ker \Phi \cap W(X_0)=\{0\}. 
\end{equation}
The (possibly  nonconvex) set $W(X_0)$ seems to be more complicated than our radial set \eqref{Rd}.  Computing $W(X_0)$ seems to be complicated in high dimensions, but its span \cite[Proposition~3.2]{HP23} is rather simple 
\begin{eqnarray}\label{SpanW}
    {\rm span}\, W(X_0)=\left\{U\begin{pmatrix}
A&B\\C&D\end{pmatrix}V^T|\; A\in \mathbb{S}^r, B\in \R^{r\times (n_2-r)}, C\in \R^{(n_1-r)\times r}, D\in \R^{(n_1-r)\times (n_2-r)}\right\}.
\end{eqnarray}
Thus a sufficient condition for solution uniqueness at $X_0$ is 
\begin{equation}\label{HP2}
    \Ker \Phi \cap {\rm span}\,W(X_0)=\{0\}; 
\end{equation}
see \cite[Corollary~4.2]{HP23}. This fact can be obtained directly from Theorem~\ref{UniRd2} as 
\[
\mathcal{R}_{\partial g^*(Y)}(X_0)\subset {\rm span}\,W(X_0)
\]
for any $Y\in\Delta(X_0)$.  The condition~\eqref{HP2} also shows that $X_0$ is a strong solution of problem~\eqref{NNM} according to \cite[Theorem~5.2]{FNP23}. In our  Example~\ref{Last} later, condition~\eqref{HP2} fails, but $X_0$ is still a unique solution. 

When $X_0$ is an optimal solution of problem~\eqref{NNM} as assumed in \cite[Corollary~4.1]{HP23}, observe from the proof of \cite[Theorem~3.2]{HP23} that 
\begin{equation}\label{LR}
\Ker \Phi \cap W(X_0)=\Ker \Phi\cap  {\rm cone}\,\{X-X_0|\; \|X\|_*=\|X_0\|_*\}=  \Ker \Phi\cap\mathcal{D}(X_0),    
\end{equation}
where $\mathcal{D}(X_0)$ is the descent cone of nuclear norm at $X_0$. 
This together Proposition~\ref{DC} explains why  condition~\eqref{HP1} ensures the solution uniqueness at $X_0$.

Although our condition~\eqref{Ker} depends on the existence of a dual certificate when $X_0$ is an optimal solution, it is more computable and verifiable than \eqref{HP1}; see also our Section~\ref{NE}. In the framework of Example~\ref{Last}, the set $W(X_0)$ in \eqref{W} is computed by
\[\begin{array}{ll}
W(X_0)=\left\{M\begin{pmatrix}1&0\\0&r\end{pmatrix}|\;M\in \mathbb{S}^2_+-\mathcal{F}^{2,1}, {\rm Tr}\,(M)=0, r\in \{-1,1\}  \right\}.
\end{array}
\]
For any $M\in \mathbb{S}^2_+-\mathcal{F}^{2,1}$ with ${\rm Tr}\,(M)=0$, we write 
\[
M=\begin{pmatrix}a&b\\b&c\end{pmatrix}-\begin{pmatrix}d&0\\0&0\end{pmatrix}=\begin{pmatrix}a-d&b\\b&c\end{pmatrix}\quad \mbox{with}\quad \begin{pmatrix}a&b\\b&c\end{pmatrix}\in \mathbb{S}_+^2\quad \mbox{and}\quad d>0.
\]
As ${\rm Tr}\,(M)=0$, we have $d=a+c>0$. Thus 
\[
M=\begin{pmatrix}-c&b\\b&c\end{pmatrix}
\]
Since $\begin{pmatrix}a&b\\b&c\end{pmatrix}\in \mathbb{S}^2$ and $a+c>0$, if $c=0$ then $a>0$ and  $b=0$. If $c>0$, $b$ is arbitrary, as $a$ can be chosen sufficiently large. Hence the set of all matrices $M$ is  
\[
\begin{pmatrix}0&0\\0&0\end{pmatrix}\cup\left\{\begin{pmatrix}-c&b\\b&c\end{pmatrix}|\; c>0, b\in \R
\right\}.
\]
It follows that 
\[
W(X_0)=\begin{pmatrix}0&0\\0&0\end{pmatrix}\cup\left\{\begin{pmatrix}-c&b\\b&c\end{pmatrix}|\; c>0, b\in \R
\right\}\cup \left\{\begin{pmatrix}-c&-b\\b&-c\end{pmatrix}|\; c>0, b\in \R
\right\}
\]
This set is nonconvex and completely different from the radial cone $\mathcal{R}_{\partial g^*(Y_0)}(X_0)$ obtained in \eqref{RT2}. Moreover, $\mathcal{R}_{\partial g^*(Y_0)}(X_0)$ is not a subset of $W(X_0)$ and vice versa. This observation highlights our distinct approach for solution uniqueness from \cite{HP23}. Nonconvexity of the set $W(X_0)$ may be a challenge when checking condition \eqref{HP1} in problems with higher dimension. \hfill$\triangle$

}    
\end{Remark}

\begin{Remark}[Solution Uniqueness and Sharp Minima]{\rm 
In \cite{CR13} Cand\`es and Retch shown that $X_0$ is a unique solution of problem~\eqref{NNM} if the following {\em Restricted Injectivity}
\begin{equation}\label{RI}
\Ker \Phi \cap\mathbb{T}=\{0\}
\end{equation}
together with Nondegeneracy Source Condition~\eqref{NSC}
 holds. In the spirit of Theorem~\ref{UniRd2}, this fact is explained as follows. When Restricted Injectivity~\eqref{RI} and Nondegeneracy Source Condition~\eqref{NSC} are satisfied,  Strict Restricted Injectivity~\eqref{SRI} holds as $U_0\mathbb{S}^rV_0^T\subset \mathbb{T}$. 
By Corollary~\ref{SRICo}, $X_0$ is the unique solution of problem~\eqref{NNM}. In \cite{HP23}, Hoheisel and Paquette also use the dual version of Restricted Injectivity:
\[
\Im\Phi^*+\mathbb{T}^\perp=\R^{n_1\times n_2}
\]
together with Nondegeneracy Source Condition~\eqref{NSC} as sufficient conditions of solution uniqueness.

It is shown in \cite[Theorem~4.6]{FNT21} that the combination of Restricted Injectivity~\eqref{RI} and Nondegeneracy Source Condition~\eqref{NSC} indeed characterizes  the stronger property,  {\em sharp minima} at $X_0$ \cite{C78,P79}, which means that  there exists some $c>0$ such that 
\[
 \|X\|_*-\|X_0\|_*\ge c\|X-X_0\|_F\quad \mbox{for any \; $X\in \R^{n_1\times n_2}$\; satisfying}\quad \Phi X=M_0.  
\]
In Example~\ref{Last}, we show that a unique solution is not a strong solution, thus not a sharp solution too. Even if Nondegeneracy Source Condition ~\eqref{NSC} holds, solution uniqueness and sharp minima of nuclear norm minimization are different; see, e.g.,  \cite[Example~5.10]{FNP23}. \hfill$\triangle$

 }
\end{Remark}

The next corollary gives the minimum bound for exact recovery. It slightly enhances the recent \cite[Corollary~5.8]{FNP23} by replacing the requirement of strong minima there by solution uniqueness. Its proof based on Strict Restricted Injectivity ~\eqref{SRI} is completely the same; so we omit the detail.   

\begin{Corollary}[Minimum bound for  exact recovery] Suppose that $X_0$ is an $n_1\times n_2$ matrix with rank $r$. Then one needs at least $\frac{1}{2}r(r+1)$ measurements $m$ of $M_0$ so that solving the nuclear norm minimization problem \eqref{NNM} recovers exactly the unique solution $X_0$.

Moreover, there exist infinitely many linear operators $\Phi:\R^{n_1\times n_2}\to \R^{\frac{1}{2}r(r+1)}$ such that $X_0$ is the unique solution of problem \eqref{NNM}. 
\end{Corollary}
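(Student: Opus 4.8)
The plan is to establish the two assertions separately: first the lower bound on the number of measurements $m$, then the existence of infinitely many operators $\Phi$ achieving $m=\frac12 r(r+1)$. For the lower bound, I would argue by contraposition using Corollary~\ref{SRICo}: if $X_0$ (with $\rank X_0=r$ and compact SVD $X_0=U_0\Sigma_0V_0^T$) is the unique solution of problem~\eqref{NNM}, then the Strict Restricted Injectivity condition \eqref{SRI} must hold, i.e. $\Ker\Phi\cap U_0\mathbb{S}^rV_0^T=\{0\}$. The linear subspace $U_0\mathbb{S}^rV_0^T\subset\R^{n_1\times n_2}$ is isomorphic to $\mathbb{S}^r$, hence has dimension $\frac12 r(r+1)$. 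If $m<\frac12 r(r+1)$, then $\Phi$ restricted to this $\frac12 r(r+1)$-dimensional subspace is a linear map into $\R^m$ with $m$ strictly smaller than the domain dimension, so by rank–nullity it has nontrivial kernel inside $U_0\mathbb{S}^rV_0^T$, contradicting \eqref{SRI}. Therefore $m\ge\frac12 r(r+1)$.

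For the second assertion, I would construct $\Phi:\R^{n_1\times n_2}\to\R^{\frac12 r(r+1)}$ explicitly so that $X_0$ is the unique solution, and then observe that there is a whole family of such operators. The natural choice is to let $\Phi$ restrict to an injective (hence bijective) linear map on the subspace $U_0\mathbb{S}^rV_0^T$, which is possible since that subspace has dimension exactly $\frac12 r(r+1)=m$; for instance take $\Phi(X)=\bigl(\la U_0 S_{ij}V_0^T, X\ra\bigr)_{1\le i\le j\le r}$ where $\{S_{ij}\}$ is a basis of $\mathbb{S}^r$. To guarantee solution uniqueness it suffices, by Corollary~\ref{SRICo}, to also arrange Nondegeneracy Source Condition~\eqref{NSC} and Strict Restricted Injectivity~\eqref{SRI}. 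Condition \eqref{SRI} holds by construction since $\Phi$ is injective on $U_0\mathbb{S}^rV_0^T$. For \eqref{NSC} one notes $E_0=U_0V_0^T\in\ri\,\partial\|X_0\|_*$ (the case $W=0$ in \eqref{subdif} with strict inequality $\|W\|<1$), so it is enough to have $E_0\in\Im\Phi^*$; this can be ensured by choosing the basis functionals of $\Phi$ so that $U_0V_0^T$ lies in their span, or equivalently by a mild adjustment of the construction. Then $X_0$ is the unique solution, and moreover it is an optimal solution because the source condition gives $0\in\partial\|X_0\|_*-\Im\Phi^*$.

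Finally, to get \emph{infinitely many} such operators, I would note that the construction has free parameters: once one valid $\Phi$ is found, any small perturbation $\Phi_t=\Phi\circ(\Id+t\Pi)$ with $\Pi$ a suitable linear endomorphism supported off $U_0\mathbb{S}^rV_0^T$ (or any composition $\Phi\circ L$ with $L$ an invertible linear map of $\R^{n_1\times n_2}$ fixing the relevant subspace setwise) preserves both \eqref{SRI} and \eqref{NSC}, hence preserves solution uniqueness; since $n_1 n_2>\frac12 r(r+1)$ in the nontrivial regime, there is a continuum of genuinely distinct such operators. Alternatively, and more cleanly, one can compose $\Phi$ with any invertible $m\times m$ matrix $G$ on the output side: $G\Phi$ has the same kernel as $\Phi$ and $\Im(G\Phi)^*=\Im\Phi^*$, so $X_0$ remains the unique solution, and there are infinitely many invertible $G\in\R^{m\times m}$.

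The main obstacle I anticipate is not the lower bound (which is a direct dimension count via Corollary~\ref{SRICo}) but rather verifying cleanly that the explicit $\Phi$ one writes down simultaneously satisfies both \eqref{SRI} and \eqref{NSC} — in particular ensuring $U_0V_0^T\in\Im\Phi^*$ while keeping $\Phi$ injective precisely on $U_0\mathbb{S}^rV_0^T$ and of output dimension exactly $\frac12 r(r+1)$. One must check that these requirements are compatible: since $U_0V_0^T$ itself lies outside $U_0\mathbb{S}^rV_0^T$ unless $V_0=U_0$, the constraint $U_0V_0^T\in\Im\Phi^*$ is a condition on the row space of $\Phi$ that does not conflict with injectivity on $U_0\mathbb{S}^rV_0^T$, but writing the basis of functionals so that both hold requires a small linear-algebra argument (essentially: the map $S\mapsto U_0 S V_0^T$ composed with the adjoint bijection lands $E_0$ appropriately). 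This is routine but is the step where care is needed; everything else follows from the already-established Corollary~\ref{SRICo}.
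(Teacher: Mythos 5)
Your proposal is correct and follows essentially the route the paper intends: the paper omits the proof, stating only that it is ``based on Strict Restricted Injectivity'' and identical to the argument of \cite[Corollary~5.8]{FNP23}, which is exactly your dimension count on the $\frac{1}{2}r(r+1)$-dimensional subspace $U_0\mathbb{S}^rV_0^T$ via Corollary~\ref{SRICo} for the lower bound, and a construction of $\Phi$ satisfying \eqref{SRI} and \eqref{NSC} for the existence part. One small correction: the obstacle you anticipate at the end is not there, since $U_0V_0^T=U_0\,\Id_r\,V_0^T$ always lies in $U_0\mathbb{S}^rV_0^T$ (take $S=\Id_r$), so your candidate $\Phi(X)=\bigl(\la U_0S_{ij}V_0^T,X\ra\bigr)_{i\le j}$ already has $E_0\in\Im\Phi^*$ with no adjustment, and composing with invertible $G\in\R^{m\times m}$ gives the required infinite family.
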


The rest of the section is devoted to the computation of the descent cone of the nuclear norm via its first-order information, as it is an important notion used in \cite{CRPW12,ALMT14} to find statistical dimensions in the theory of exact recovery. According to formula~\eqref{ComDes}, we also need to know the critical cone \eqref{Crit} of the nuclear norm:
\begin{equation}\label{CC}
    \mathcal{C}(X_0)= \{W\in \R^{n_1\times n_2}|\; dg(X_0)(W)\le 0\}.
\end{equation}
For any $(U,V)\in \mathcal{O}(X_0)$ and $W\in \R^{n_1\times n_2}$, we write 
\[
W=U\begin{pmatrix}A&B\\C&D\end{pmatrix}V^T
\]
with some $A\in \R^{r\times r}$, $B\in \R^{r\times (n_2-r)}$, $C\in \R^{n_1\times r}$, and $D\in \R^{(n_1-r)\times (n_2-r)}$. By formula \eqref{subdif} and \eqref{Dd}, we have
\[
dg(X_0)(W)=\max_{Y\in \partial \|X_0\|_*}\la Y,W\ra={\rm Tr}\,A+\|D\|_*.
\]
Combining this with \eqref{CC} gives us that 
\begin{eqnarray}\label{CC2}
   \begin{array}{ll} \bd\mathcal{C}(X_0)= \Big\{U\begin{pmatrix}A&B\\C&D\end{pmatrix}V^T|\;& A\in \R^{r\times r}, B\in \R^{r\times (n_2-r)},C\in \R^{n_1\times r}, D\in \R^{(n_1-r)\times (n_2-r)},\\
    &{\rm Tr}\,A+\|D\|_*=0\Big\}.
    \end{array}
\end{eqnarray}
Moreover, we have
\begin{eqnarray*}
   \begin{array}{ll} {\rm int}\, \mathcal{C}(X_0)= \Big\{U\begin{pmatrix}A&B\\C&D\end{pmatrix}V^T|\;& A\in \R^{r\times r}, B\in \R^{r\times (n_2-r)},C\in \R^{n_1\times r}, D\in \R^{(n_1-r)\times (n_2-r)},\\
    &{\rm Tr}\,A+\|D\|_*<0\Big\}.
    \end{array}
\end{eqnarray*}


The following result gives an exact computation of the descent cone for the nuclear norm, which is similar with its formula for $\ell_1/\ell_2$ norm established in \cite[Theorem~5.1]{FNT21}.  

\begin{Proposition}[Descent cone of nuclear norm]\label{US} Let $U_I\Sigma V_I^T$ be the SVD of $X_0$. The descent cone at $X_0$ is calculated by
\begin{equation}\label{DC3}
\mathcal{D}(X_0)=\mathcal{E} \cup ({\rm int}\, \mathcal{C}(X_0)),
\end{equation}
where 
\begin{eqnarray}\label{E}
\begin{array}{ll}\mathcal{E}=\Big\{W\in \R^{n_1\times n_2}|& W=U\begin{pmatrix}A&BC&0\\C^TB^T&C&0\end{pmatrix}V^T, A\in \mathbb{S}^r,B\in \R^{r\times (n_1-r)}, C\in \mathbb{S}_+^{n_1-r},\\
& {\rm Tr}\, (A+C)=0, 
(U,V)\in \mathcal{O}(X_0)\Big\}.
\end{array}
\end{eqnarray}
Moreover, if $X_0$ is an optimal solution of problem \eqref{NNM}, we have
\begin{equation*}
    \Ker \Phi \cap \mathcal{D}(X_0)=\Ker \Phi\cap \left\{U\begin{pmatrix}A&BC&0\\C^TB^T&C&0\\0&0&0\end{pmatrix}V^T|\; A\in \mathbb{S}^r,B\in \R^{r\times (p-r)}, C\in \mathbb{S}_+^{p-r}, {\rm Tr}\, (A+C)=0 \right\} 
\end{equation*}
for any dual certificate $Y\in \Delta(X_0)$ with $p=p(Y)$ from \eqref{p} and any pair $(U,V)\in \mathcal{O}(X_0)\cap\mathcal{O}(Y_0)$. 
\end{Proposition}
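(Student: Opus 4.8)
The plan is to combine the general descent-cone formula \eqref{ComDes} from Proposition~\ref{DesRd} with the explicit formula \eqref{Rd} for the radial cone of the nuclear norm from Lemma~\ref{RC}. Formula \eqref{ComDes} expresses $\mathcal{D}(X_0)$ as $\bigcup_{Y\in\partial\|X_0\|_*}\bigl[\mathcal{R}_{\partial g^*(Y)}(X_0)\cap\bd\mathcal{C}(X_0)\bigr]\cup(\i\,\mathcal{C}(X_0))$, so the first task is to identify $\mathcal{R}_{\partial g^*(Y)}(X_0)\cap\bd\mathcal{C}(X_0)$ as $Y$ ranges over $\partial\|X_0\|_*$. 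Fix $(U,V)\in\mathcal{O}(X_0)$. For a given $Y\in\partial\|X_0\|_*$ written as in \eqref{OY}, one finds (via \eqref{OY2}--\eqref{SSVD}) a pair $(\ou,\ov)\in\mathcal{O}(X_0)\cap\mathcal{O}(Y)$ realizing the simultaneous SVD, and then \eqref{Rd} gives $\mathcal{R}_{\partial g^*(Y)}(X_0)$ in block form with blocks of size determined by $p=p(Y)$, where $r\le p\le n_1$. Taking the union over all $Y\in\partial\|X_0\|_*$ amounts to letting $p$ range from $r$ up to $n_1$; the largest such cone occurs at $p=n_1$, i.e.\ when $Y$ lies in the relative interior of $\partial\|X_0\|_*$, in which case the block decomposition is exactly the $r\times r$, $r\times(n_1-r)$, $(n_1-r)\times(n_1-r)$ pattern appearing in the definition of $\mathcal{E}$. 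I would argue that this maximal cone already contains all the smaller ones (any $W$ arising for some $p<n_1$ fits into the $p=n_1$ template by padding with zeros), so $\bigcup_Y\mathcal{R}_{\partial g^*(Y)}(X_0)$ equals
\[
\Bigl\{U\begin{pmatrix}A&BC&0\\C^TB^T&C&0\end{pmatrix}V^T:\ A\in\mathbb{S}^r,\ B\in\R^{r\times(n_1-r)},\ C\in\mathbb{S}_+^{n_1-r}\Bigr\}.
\]

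Next I would intersect this union with $\bd\mathcal{C}(X_0)$. Using the computation $dg(X_0)(W)=\Tr A+\|D\|_*$ recorded just before the proposition (here, for a matrix of the block form above, the lower-right block is $D=C\in\mathbb{S}_+^{n_1-r}$, so $\|D\|_*=\|C\|_*=\Tr C$ since $C\succeq0$), the condition $dg(X_0)(W)=0$ becomes precisely $\Tr(A+C)=0$. This is exactly the extra constraint in the definition of $\mathcal{E}$ in \eqref{E}. Combining with the interior piece $\i\,\mathcal{C}(X_0)$ untouched yields \eqref{DC3}. One small point to check carefully: \eqref{ComDes} intersects each $\mathcal{R}_{\partial g^*(Y)}(X_0)$ with $\bd\mathcal{C}(X_0)$ \emph{before} the union, and I should make sure that passing from ``union then intersect'' to ``intersect then union'' is harmless here — it is, because $\bd\mathcal{C}(X_0)$ is a fixed set and $W\in\mathcal{R}_{\partial g^*(Y)}(X_0)\cap\bd\mathcal{C}(X_0)$ for some $Y$ iff $W$ lies in the union and satisfies $dg(X_0)(W)=0$.

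For the second assertion, assume $X_0$ is an optimal solution of \eqref{NNM}, so $\Delta(X_0)\ne\emptyset$; fix a dual certificate $Y\in\Delta(X_0)$ with $p=p(Y)$ and a pair $(U,V)\in\mathcal{O}(X_0)\cap\mathcal{O}(Y)$. I would intersect \eqref{DC3} with $\Ker\Phi$. The interior part contributes nothing new beyond what the argument of Proposition~\ref{Int} already handles: if $W\in\Ker\Phi$ and $dg(X_0)(W)<0$ then $\la Y,W\ra<0$, but $\la Y,W\ra=0$ since $Y\in\Im\Phi^*$ — contradiction, so $\Ker\Phi\cap\i\,\mathcal{C}(X_0)=\emptyset$. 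Hence $\Ker\Phi\cap\mathcal{D}(X_0)=\Ker\Phi\cap\mathcal{E}$, and by Proposition~\ref{Int} this also equals $\Ker\Phi\cap\mathcal{R}_{\partial g^*(Y)}(X_0)$; intersecting the explicit formula \eqref{Rd} (which uses blocks of size governed by $p$) with the constraint $dg(X_0)(W)\le0$ — which on $\mathcal{R}_{\partial g^*(Y)}(X_0)$ forces $dg(X_0)(W)=\Tr(A+C)$ with $C\succeq0$, and combined with $W\in\mathcal{E}$ gives $\Tr(A+C)=0$ — produces exactly the displayed set. The main obstacle I anticipate is purely bookkeeping: making the ``union over $Y$ collapses to the $p=n_1$ block pattern'' step rigorous, i.e.\ checking that every $W$ produced by \eqref{Rd} for a sub-maximal $p$ genuinely admits the larger block representation (with $B$ of full width $n_1-r$ and $C\in\mathbb{S}_+^{n_1-r}$), which requires tracking how the orthogonal pair $(U,V)$ changes with $Y$ and invoking that all such pairs lie in $\mathcal{O}(X_0)$, so the ``$(U,V)\in\mathcal{O}(X_0)$'' quantifier inside \eqref{E} absorbs the freedom.
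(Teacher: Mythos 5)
Your proposal is correct and follows essentially the same route as the paper's proof: combine the general formula \eqref{ComDes} with the radial-cone computation \eqref{Rd}, absorb the sub-maximal cases $p(Y)<n_1$ into the $p=n_1$ block pattern by zero-padding $B$ and $C$, translate membership in ${\rm bd}\,\mathcal{C}(X_0)$ into ${\rm Tr}\,(A+C)=0$ via \eqref{CC2}, and derive the second assertion from Proposition~\ref{Int}. One slip to correct: $p(Y)=n_1$ is \emph{not} attained when $Y\in{\rm ri}\,\partial\|X_0\|_*$ (there $p(Y)=r$ and the radial cone is smallest); the maximal cone comes from a relative-boundary choice such as $Y=U\begin{pmatrix}\Id_{n_1}&0\end{pmatrix}V^T$, which is exactly the certificate the paper uses to show $\mathcal{E}\subset\mathcal{R}_{\partial g^*(Y)}(X_0)\cap{\rm bd}\,\mathcal{C}(X_0)$.
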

\begin{proof}
Let us verify the inclusion ``$\subset$'' in \eqref{DC3}. Pick any $W\in \mathcal{D}(X_0)\setminus {\rm int}\, \mathcal{C}(X_0)$, by formulae \eqref{DC} and \eqref{Rd} there exist $Y\in \partial \|X_0\|_*$ and a pair $(U,V)\in \mathcal{O}(X_0)\cap\mathcal{O}(Y_0)$ such that 
\[
W=U\begin{pmatrix}A&BC&0\\C^TB^T&C&0\\0&0&0\end{pmatrix}V^T\in \bd \mathcal{C}(X_0)
\]
with some $A\in \mathbb{S}^r$, $B\in \R^{r\times (p-r)}$, $C\in \mathbb{S}_+^{p-r}$, $p=p(Y)$ defined in \eqref{p}.    Define 
\[
\overline C=\begin{pmatrix}C&0\\0&0\end{pmatrix}\in \mathbb{S}^{n_1-r}_+\quad \mbox{and}\quad \overline B=\begin{pmatrix}B&0\end{pmatrix} \in \R^{r\times (n_1-r)}.
\]
We obtain that $W=U\begin{pmatrix}A&\overline B\, \overline C&0\\\overline C^T\overline B^T&\overline{C}&0\end{pmatrix}V^T\in \bd \mathcal{C}(X_0)$. Note from \eqref{CC2} that 
\[
0={\rm Tr}\, A+\|\overline C\|_* ={\rm Tr}\, A+{\rm Tr}\, \overline C. 
\]
This tells us that $W\in \mathcal{E}$ defined in \eqref{E} and verifies the inclusion ``$\subset$'' in \eqref{DC3}. 

To justifies the converse inclusion in \eqref{DC3}. Pick any $W\in \mathcal{E}$ with representation
\[
W=U\begin{pmatrix}A&BC&0\\C^TB^T&C&0\end{pmatrix}V^T
\]
with some $A\in \mathbb{S}^r$, $B\in \R^{r\times (n_1-r)}$, and  $C\in \mathbb{S}_+^{n_1-r}$ such that ${\rm Tr}\, (A+C)=0$. By \eqref{CC2}, $W\in \bd \mathcal{C}(X_0)$. Moreover, define $Y=U\begin{pmatrix}\Id_{n_1}&0\end{pmatrix}V^T\in \partial \|X_0\|_*$ by \eqref{subdif}. Note from formula \eqref{Rd}, $W\in \mathcal{R}_{\partial g^*(Y)}(X_0)$. It follows that $\mathcal{E}\subset \mathcal{R}_{\partial g^*(Y)}(X_0)\cap \bd \mathcal{C}(X_0)$. By \eqref{ComDes}, we have the inclusion ``$\supset$'' in \eqref{DC3}. The equality in \eqref{DC3} is verified.  

The second part of this proposition follows directly from Proposition~\ref{Int} and the formula \eqref{CC2} for the boundary of the critical cone. 
\end{proof}

\section{Numerical Experiments}\label{NE}
\setcounter{equation}{0}

In this section, we conduct numerical experiments to illustrate the solution uniqueness for the nuclear norm minimization problem \eqref{NNM}. These experiments are performed for different matrix ranks $r$ and numbers of measurements $m$. 


In the experiment, we generate $X_0$, an $n\times n$ matrix of rank $r$, by sampling two factors $W\in\mathbb R^{n\times r}$ and $H\in\mathbb R^{n\times r}$ with independent and identically distributed (i.i.d.) random entries and setting $X_0 = WH^*$. We vectorize problem \eqref{NNM} in the following form:
\begin{equation}\label{vecNNM}
    \min_{X\in \R^{n\times n}}\quad \|X\|_*\quad \mbox{subject to}\quad \Phi\ \text{vec}(X) = \Phi\ \text{vec}(X_0),
\end{equation}
where $\Phi $ is an ${m\times n^2}$ matrix  drawn from the standard Gaussian ensemble. The point $X_0$ is said to be {\em recovered} if solving problem \eqref{vecNNM} gives us an optimal solution $\OX$ that is relatively close to $X_0$, in particularly, $\|\OX - X_0\|_F/\|X_0\|_F < 10^{-3}$ as proposed in \cite{CR09}. We use cvxpy package to solve this problem to find $\OX$.

In order to check whether $X_0$ is a unique solution of problem \eqref{vecNNM}, by Corollary~\ref{UniRaNu} and Remark~\ref{Max}, we have to verify that  the following nonconvex optimization problem has global maximum value $0$: 
\begin{equation}\label{ncvx}
\begin{aligned}
&\max\; \frac{1}{2}\|A\|^2_F+\frac{1}{2}\|C\|_F^2\; \;\\
&\mbox{subject to }\; \;\Phi\left(U\begin{pmatrix}A&BC&0\\C^TB^T&C&0\\0&0&0\end{pmatrix}V^T\right)=0, A\in \mathbb{S}^r,B\in \R^{r\times (p-r)}, C\in \mathbb{S}_+^{p-r},
 \end{aligned}
\end{equation}
where $(U,V)\in \mathcal{O}(X_0)\cap\mathcal{O}(Y)$ is from~\eqref{SSVD} with $Y\in \Delta(X_0)$ being a particular dual certificate computed as in Remark \ref{ComY}, $r={\rm rank}\, (X_0)$, and $p=p(Y)$ from \eqref{p}. Indeed, let $\Hat U\Sigma \Hat V^T$ be an SVD of $\Phi$,  $\Hat V_G$ be the matrix whose columns are the last $n^2-r$ columns of $V$, and $U_0\begin{pmatrix}\Sigma_0&0\\0&0\end{pmatrix} V_0^T$ be a full SVD of $X_0$. To find a particular certificate of \eqref{vecNNM}, we solve the following problem by using the \texttt{cvxpy} package:
\begin{equation*}
 \min_{Z\in\TT_0^\perp}\|Z\|\quad \mbox{subject to}\quad N\text{vec}(Z) = - N\text{vec}(E_0),
\end{equation*}
where $N = V_G^T$, $E_0 = U_0\begin{pmatrix}\Id_r&0\\0&0\end{pmatrix} V_0^T$, and $\TT_0^\perp$ is known by
\begin{equation*}
    \TT_0^\perp=\left\{U_0\begin{pmatrix}0&0\\0&D\end{pmatrix}V_0^T|\; D\in \R^{(n-r)\times (n - r)}\right\}.
\end{equation*}
Let $\rho(X_0)$ be the optimal value of this problem and $Z_0$ be an optimal solution. Note also from \cite[Proposition~4.4 and Remark~4.5]{FNT21}, $X_0$ is an optimal solution of problem~\eqref{vecNNM} if and only if $\rho (X_0)\le 1$. Due to possible errors in numerical computation, we increase this bound to $\rho(X_0)<1.001$ for optimal solutions.  It is also known from \cite[Section~4]{FNT21} that $Y=Z_0+E_0$ is a particular dual certificate of \eqref{vecNNM}.

Back to the maximum problem above, we rewrite it  as follows
\begin{equation}\label{rnoncvx}
    \begin{aligned}
        & \max\;\frac{1}{2}\|A\|^2_F+\frac{1}{2}\|C\|_F^2\\
        & \mbox{s.t.}\; \;\Phi\left(U\begin{pmatrix}A&D&0\\ D^T&C&0\\0&0&0\end{pmatrix}V^T\right)=0, D = E, E = BC, A\in \mathbb{S}^r,B\in \R^{r\times (p-r)}, C\in \mathbb{S}_+^{p-r}.
    \end{aligned}
\end{equation}
It is worth noting that this problem takes the format of the one considered in \cite{BST18}, for which a multiblock version of ALBUM (Adaptive Lagrangian-Based mUltiplier Method) \cite{BST18}, named mALBUM, is investigated. The algorithm mALBUM for solving \eqref{rnoncvx} iteratively generates a sequence $\{A^k,C^k,B^k,D^k,E^k,F^k, G^k)\}$ by
\begin{equation*}
    \begin{aligned}
        (A^{k+1},C^{k+1}, D^{k+1}) \in &\arg\min - \langle A^k, A\rangle - \langle C^k, C\rangle - \langle F^k, D\rangle  - \langle G^k,B^kC\rangle + \frac{\beta_k}{2}\|E^k-D\|^2\\
        &\quad\quad + \frac{\beta_k}{2}\|E^k-B^kC\|^2\\
        &\mbox{s.t.}\; \;\Phi\left(U\begin{pmatrix}A&D&0\\ D^T&C&0\\0&0&0\end{pmatrix}V^T\right)=0, A\in \mathbb{S}^r, C\in \mathbb{S}_+^{p-r},\\
        B^{k+1} \in &\arg\min  - \langle G^k,BC^{k+1}\rangle  + \frac{\beta_k}{2}\|E^k-BC^{k+1}\|^2,\\
        E^{k+1} = &  \left(-F^k - G^k\right)/(2\beta_k) +(D^{k+1}+B^{k+1}C^{k+1})/2,\\
        F^{k+1} = & F^k + \beta_k(E^{k+1}-D^{k+1}),\\
        G^{k+1} = & G^k + \beta_k(E^{k+1}-B^{k+1}C^{k+1}).
    \end{aligned}
\end{equation*}
Here, the parameter $\beta$ is updated adaptively by using the following rule: 
if \[
\|E^{k+1}-D^{k+1}\| + \|E^{k+1}-B^{k+1}C^{k+1}\| > M\quad \mbox{ for some large constant $M$},\]
then $\beta_{k+1} \leftarrow \nu\beta_k$ with some constant $\nu>0$; otherwise, $\beta_{k+1} \leftarrow \beta_k$. In our experiment, we set $\nu = 1.2$, $M = 10^5$, and $\beta_0 = 1$. We use the \texttt{cvxpy} package to solve sub-problems. 

To start the algorithm, the initial values of  $(A^0,C^0,B^0,D^0,E^0,F^0,G^0)$ are chosen as follows: $A^0 = (V_1 + V_1^T)$, where $V_1\in \mathbb R^{r\times r}$ is drawn from the standard Gaussian ensemble, $B^0$ is drawn from the standard Gaussian ensemble, $C^0 = V_2V_2^T$, where $V_2\in \mathbb R^{(p-r)\times (p-r)}$ is drawn from the standard Gaussian ensemble, $D^0 = E^0 = B^0C^0, F^0 = 0$, and $G^0 = 0$.

We classify solution uniqueness at $X_0$ if $\|\overline A\| + \|\overline C\| < 10^{-5}$ and $\rho(X_0) < 1.001$, where $\overline A$ and $\overline C$ are outputs of mALBUM. 
To demonstrate the occurrence of solution uniqueness for problem \eqref{vecNNM}, we create graphs that show the proportion of solution uniqueness with respect to the number of measurements in Figures \ref{fig:NNM}. For each fixed value of $n=40$ and $2\le r\le 7$, we conduct a simulation study on 100 random instances at each number of measurements $m$. At each $m$, the proportion of cases (among $100$) of $X_0$ that are recovered by solving problem \eqref{vecNNM} are depicted as the red curve \cite{CR09}. The percentage of cases of $X_0$ that are unique solutions of problem~\eqref{vecNNM} based on the above classification is displayed as the blue curve. It is natural to see that all solutions $X_0$ that can be recovered by solving problem \eqref{vecNNM} using the \texttt{cvxpy} package are unique solutions. But the most important observation from Figure~\ref{fig:NNM} is the minimal difference between the red and blue curves.  This suggests that our method in this paper sharply predicts solution uniqueness for nuclear norm minimization problems \eqref{NNM}. Actually, the curve blue is slightly above the red one. The maximum gap between them is around 3-4\% at different ranks, but for most of $m$ (number of observations) it is unnoticeable. In our opinion this small difference occurs  because the method mALBUM above solves problem \eqref{rnoncvx} with only {\em critical solutions}, which may be not global solutions (see discussion in Remark~\ref{Max}). It is possible that some of the solutions of problem \eqref{ncvx} are still in the form $(0,B,0)\in \mathbb{S}^r\times \R^{r\times(p-r)}\times \mathbb{S}^{p-r}_+$, but the global optimal value is not $0$. In this case the corresponding $X_0$ is not a unique solution, but this situation seems to be rare from our experiments described in Figure~\ref{fig:NNM}.

\begin{figure*}[!htpb]
\vspace{-1ex}
\begin{center}
\includegraphics[width=1\linewidth]{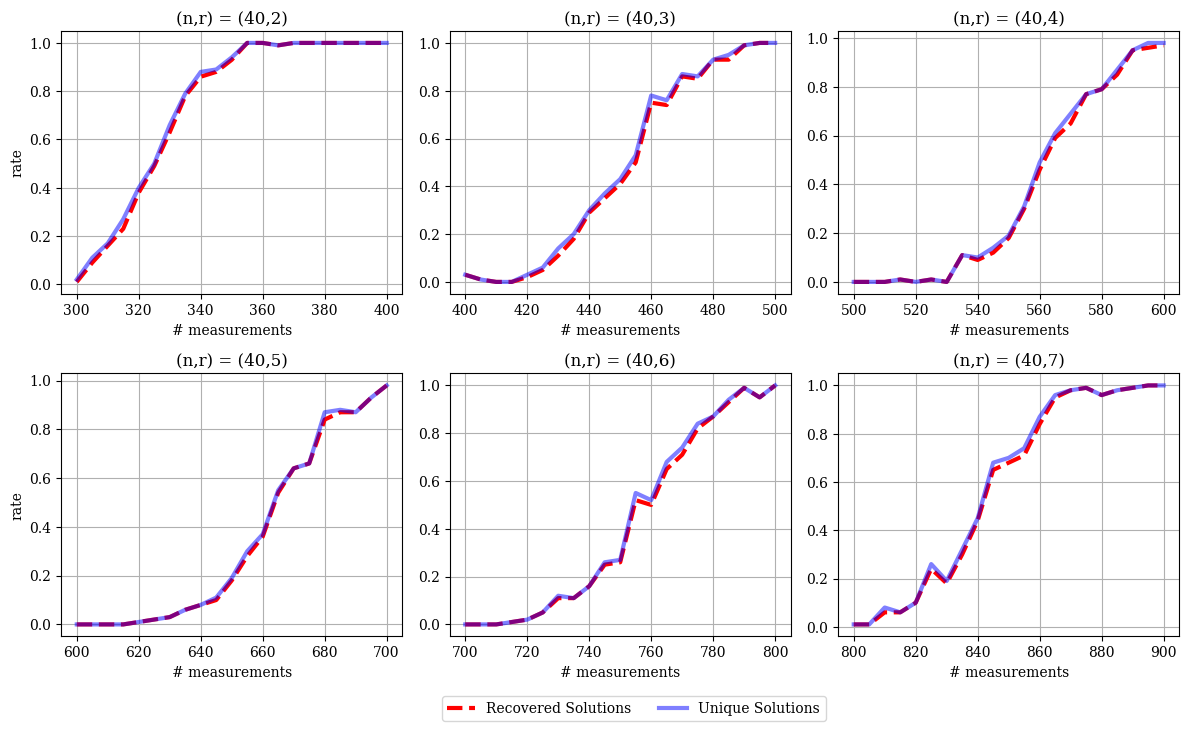}
\caption{Proportions of cases for which $X_0$ is the unique solution with respect to the number of measurements.\label{fig:NNM}} 
\end{center} 
\vspace{-3ex}
\end{figure*}


\begin{thebibliography}{99}

\bibitem{ALMT14} D. Amelunxen, M. Lotz, M. B. McCoy, and J. A. Tropp: Living on the edge: Phase transitions in convex programs with random data, Information and Inference: A Journal of the IMA, \textbf{3} (2014), 224--294.

\bibitem{AG08} F. J. Arag\'on Artacho and M. H.  Geoffroy: Characterizations of metric regularity of subdifferentials, \textit{J. Convex Anal.}, {\bf 15} (2008), 365--380.


\bibitem{BDE09} A. M. Bruckstein, D. L. Donoho, and M. Elad. From sparse solutions of systems of equations
to sparse modeling of signals and images, {\em SIAM Rev.}, {\bf 51} (2009), 34--81.

\bibitem{BLN22} Y. Bello-Cruz, G. Li, and T. T. A. Nghia: Quadratic growth condition and uniqueness of optimal solution to Lasso, \textit{J. Optim. Theory Appl.}, {\bf 194} (2022), 167--190.

 \bibitem{BNPS17} J. Bolte, T. P. Nguyen, J. Peypouquet, B. W. Suter: From error bounds to the complexity of first-order descent methods for convex functions, \textit{Math. Program.}, {\bf 165} (2017), 471--507. 

\bibitem{BS00}
J.~F. Bonnans and A.~Shapiro.
\newblock {\em Perturbation analysis of optimization problems}.
\newblock Springer Series in Operations Research. Springer-Verlag, New York,
  2000.

  \bibitem{BST18}
  J. Bolte, S. Sabach, and  M. Teboulle: Nonconvex Lagrangian-based optimization: monitoring schemes and global convergence, \textit{Mathematics of Operations Research}, {\bf 43} (2018), 1210--1232.

  

  \bibitem{CDZ17} Y. Cui, C. Ding, and X.  Zhao: Quadratic growth conditions for convex matrix optimization problems associated with spectral functions, \textit{SIAM J. Optim.}, {\bf 27} (2017), 2332--2355.

  \bibitem{CRPW12} V. Chandrasekaran, B. Recht, P.A Parrilo, and A. S. Willsky: The convex geometry of linear inverse problems, \textit{Found Comput Math}, \textbf{12} (2012), 805--849.


  \bibitem{CP10} E.  Cand\`es
and Y. Plan: Matrix completion with noise, \textit{Proceeding of the IEEE}, \textbf{98} (2010), 925--936.

\bibitem{CR09} E. Cand\`es and B. Recht: Exact matrix completion via convex optimization \textit{Found. Comput. Math.}, \textbf{9} (2009), 717--772.

\bibitem{CR13}  E. Cand\`es and B. Recht: Simple bounds for recovering low-complexity models, \textit{Math. Program.},  \textbf{141} (2013), 577--589.

\bibitem{CT05} E. J. Cand\`es and T. Tao. Decoding by linear programming, \textit{IEEE Trans. Inform. Theory},
{\bf 51} (2005), 4203--4215.

\bibitem{C78} L. Crome: Strong uniqueness, \textit{Numer. Math.}, \textbf{29} (1978), 179--193.

\bibitem{C91} I. Csisz\'ar: Why least squares and maximum entropy? An axiomatic approach to inference for linear
inverse problems, \textit{Ann. Statist.}, {\bf 19} (1991), 2032--2066.

\bibitem{DMN14} D. Drusvyatskiy, B. S. Mordukhovich, and T. T. A.  Nghia: Second order growth, tilt stability, and metric regularity of
the subdifferential,  \textit{J. Convex Anal.}, {\bf 21} (2014), 1165--1192.

\bibitem{DH01} D. L. Donoho and X. Huo: Uncertainty principles and ideal atomic decomposition, \textit{IEEE
Trans. Inform. Theory}, {\bf 47} (2001), 2845--2862.

\bibitem{FR13} S. Foucart and H. Rauhut: \textit{A Mathematical Introduction to Compressive Sensing}, Birkh\"auser, New York, 2013.

\bibitem{F05} J. J. Fuchs: Recovery of exact sparse representations in the presence of bounded noise, \textit{IEEE Trans. Inf. Theory}, \textbf{51} (2005), 3601--3608.

\bibitem{FNT21} J. Fadili, T. T. A. Nghia, and T. T. T. Tran: Sharp, strong and unique minimizers for low complexity robust recovery, \textit{IMA Inf. Inference}, \textbf{12} (2023), 1461--1513.

\bibitem{FNP23} J. Fadili, T. T. A. Nghia, and D. N. Phan: Geometric characterizations for strong minima with applications to nuclear norm minimization problems, arXiv:2308.09224v1 (2023).

\bibitem{G17} J. C. Gilbert: On the solution uniqueness characterization in the $\ell_1$ norm and polyhedral gauge recovery, \textit{ J. Optim. Theory Appl.},  \textbf{172} (2017), 70--101.

\bibitem{G11} M. Grasmair. Linear convergence rates for Tikhonov regularization with positively homogeneous
functionals, \textit{Inverse Problems},  {\bf 27} (2011), 075014, 16.

\bibitem{GHS11}  M. Grasmair, M.  Haltmeier, O. Scherzer: Necessary and sufficient conditions for linear convergence of $\ell_1$-regularization,  \textit{Comm.  Pure Applied Math.}, \textbf{64} (2011), 161--182.

\bibitem{HJ85} R. A. Horn and C. R. Johnson: {\em Matrix Analysis}, Cambridge University Press, New York, 1985. 

\bibitem{HP23} T. Hoheisel and E. Paquette: Uniqueness in nuclear norm minimization: Flatness of the nuclear norm sphere and simultaneous polarization, \textit{J. Optim. Theo. Appl.}, {\bf 197} (2023), 252?-276.

\bibitem{HKS23} J. He, C. Kan, and W. Song: On solution uniqueness and robust recovery for sparse regularization
with a gauge: from dual point of view,  arXiv:2312.11168 (2023).


\bibitem{HS23} N. T. V. Hang and E. Sarabi: Smoothness of subgradient mappings and its applications in parametric optimization, arXiv:2311.06026 (2023).

\bibitem{LS05a} A. S. Lewis and H. S. Sendov: Nonsmooth analysis of singular values. I. Theory.
\textit{Set-Valued Anal.}, {\bf 13} (2005), 213--241.

\bibitem{LS05b} A. S. Lewis and H. S. Sendov: Nonsmooth analysis of singular values. II. Applications, \textit{Set-Valued Anal.}, {\bf 13} (2005), 243--264.

 \bibitem{LZ13} A. S. Lewis and S. Zhang: Partial smoothness, tilt stability, and generalized Hessians, {\em SIAM J. Optim.}, {\bf 23} (2013), 74--94. 

\bibitem{LPB21} Y. Liu, S. Pan, and S. Bi: Isolated calmness of solution mappings and exact recovery conditions for nuclear norm optimization problems, {\em Optimization}, {\bf 70} (2021), 481--510.

\bibitem{JKL15} J. S. J{\o}rgensen, C. Kruschel, and D. A. Lorenz. Testable uniqueness conditions for empirical
assessment of undersampling levels in total variation-regularized X-ray CT, {\em  Inverse Probl. Sci.
Eng.}, {\bf 23} (2015), 1283--1305.



 \bibitem{M92} B. S. Mordukhovich: Sensitivity analysis in nonsmooth optimization, in {\em Theoretical Aspects of Industrial Design} (D. A. Field and V. Komkov, eds.), SIAM Proc. Applied Math., Vol. 58, pp. 32--46, SIAM, Philadelphia, PA, 1992.

 \bibitem{M1} B. S. Mordukhovich: {\em Variational Analysis and Generalized Differentiation}, Springer, 2006.

 \bibitem{MS19} S. Mousavi and J. Shen: Solution uniqueness of convex piecewise affine functions based optimization with applications to constrained $\ell_1$-minimization, \textit{ESAIM: Control Optim. Cal. Variations}, \textbf{25} (2019), No. 56, 27. 


 \bibitem{MR12} B. S. Mordukhovich and R. T. Rockafellar:
\newblock Second-order subdifferential calculus with applications to tilt stability in optimization.
\newblock {\em SIAM J. Optim.},
22:953--986, 2012.

 \bibitem{P79} B. T. Polyak: Sharp minima. Technical report, Institute of Control Sciences Lecture Notes,
Moscow, USSR, 1979.

\bibitem{P87} B. T. Polyak: {\em Introduction to Optimization}, Optimization Software, Inc., Publications Division, New York, 1987.

  \bibitem{PR98}
R.~A. Poliquin and R.~T. Rockafellar: Tilt stability of a local minimum.
\newblock {\em SIAM J. Optim.}, 8(2):
287--299, 1998.


\bibitem{RRN12} N. Rao, B. Recht, and R. Nowak: Universal measurement bounds for structured sparse signal
recovery {\em In Proceedings of AISTATS} (2012).

\bibitem{RF08} V. Roth and B. Fischer: The group-lasso for generalized linear models: uniqueness of solutions
and efficient algorithms, In {\em Proceedings of the 25th international conference on Machine
learning} (2008).

\bibitem{R70} R. T. Rockafellar: {\em Convex Analysis}, Princeton University Press, Princeton, New Jersey, 1970.

\bibitem{RW98} R. T. Rockafellar and  R. J-B. Wets: {\em Variational Analysis}, Springer, Berlin, 1998.

\bibitem{T13} R. J. Tibshirani: The Lasso problem and uniqueness, {\em Electron. J. Stat.}, {\bf 7} (2013), 1456--1490.

\bibitem{VPDF13} S. Vaiter, G. Peyr\'e, C. Dossal, and J. Fadili. Robust sparse analysis regularization,  \textit{IEEE Trans.
Inform. Theory}, {\bf 59} (2013), 2001--2016.

\bibitem{W92} G. A. Watson: Characterization of the subdifferential of some matrix norms, \textit{Linear Algebra Appl.}, {\bf 170} (1992), 33--45.

 

\bibitem{ZS17} Z. Zhou and A. M-C.  So: A unified approach to error bounds for structured convex optimization, \textit{Math. Program.}, {\bf 165} (2017), 
689--728.

\bibitem{ZT95}
R. Zhang and J. Treiman:
 Upper-Lipschitz multifunctions and inverse subdifferentials, \textit{\em Nonlinear Anal.}, {\bf 24} (1995), 273--286.

\bibitem{ZYC15} H. Zhang, W. Yin, and  L. Cheng: Necessary and sufficient conditions of solution uniqueness in 1-norm minimization,  \textit{ J. Optim. Theory Appl.}, \textbf{164} (2015), 109--122

\bibitem{ZYY16}  H. Zhang, H.,  M. Yan, and W.  Yin: One condition for solution uniqueness and robustness of both $\ell_1$-synthesis and $\ell_1$-analysis minimization, \textit{Adv. Comput. Math.}, \textbf{42} (2016), 1381-1399.

\end{thebibliography}
\end{document}